\documentclass[a4paper,12pt]{amsart}
\usepackage[utf8x]{inputenc}
\usepackage[T1]{fontenc}
\usepackage[a4paper,includeheadfoot,margin=2.54cm]{geometry}
\usepackage{stmaryrd}
\usepackage{amsmath}
\usepackage{amssymb}
\usepackage{amsthm}
\usepackage{mathrsfs}
\usepackage[all]{xy}
\usepackage{xcolor}
\usepackage{enumitem}
\setenumerate[1]{itemsep=0pt,partopsep=0pt,parsep=\parskip,topsep=5pt}
\setitemize[1]{itemsep=0pt,partopsep=0pt,parsep=\parskip,topsep=5pt}
\setdescription{itemsep=0pt,partopsep=0pt,parsep=\parskip,topsep=5pt}
\usepackage{times}
\usepackage{mathabx}
\usepackage{amsrefs}
\usepackage[pdftex]{graphicx}
\usepackage[pdftex,
colorlinks=true,
linkcolor=purple,
citecolor=blue,
hyperindex,
plainpages=false,
bookmarks=true,
bookmarksopen=true,
bookmarksnumbered]{hyperref}
\theoremstyle{plain}
\newtheorem{theorem}{Theorem}[section]
\newtheorem*{theorem*}{Theorem}
\newtheorem{corollary}[theorem]{Corollary}
\newtheorem{lemma}[theorem]{Lemma}

\theoremstyle{definition}
\newtheorem{definition}[theorem]{Definition}

\theoremstyle{example}
\newtheorem{example}[theorem]{Example}

\theoremstyle{note}

\theoremstyle{remark}

\numberwithin{equation}{section}

\newcommand{\sd}{\mathbin{\triangle}}
\newcommand{\wc}{\rightharpoonup} 
\newcommand{\grass}[2]{\mathbf{G}(#1,#2)}
\newcommand{\Var}{\mathbf{V}}     
\newcommand{\RVar}{\mathbf{RV}}   
\newcommand{\IVar}{\mathbf{IV}}   
\newcommand{\var}{\mathbf{v}}     
\newcommand{\oball}{\mathbf{U}}
\newcommand{\cball}{\mathbf{B}}
\newcommand{\HM}{\mathcal{H}}
 
\newcommand{\mass}{\mathsf{M}} 
\newcommand{\fn}{\mathsf{W}} 
\newcommand{\RC}{\mathscr{R}}

\newcommand{\PC}{\mathscr{P}} 
\newcommand{\FC}{\mathscr{F}} 
 
\DeclareMathOperator{\VarTan}{VarTan}   
\DeclareMathOperator{\spt}{spt}
\DeclareMathOperator{\id}{id}
\DeclareMathOperator{\Tan}{Tan}
\DeclareMathOperator{\Lip}{Lip}
\DeclareMathOperator{\dist}{dist}
\DeclareMathOperator{\diam}{diam}
\DeclareMathOperator{\Vol}{Vol}
%
%
\DeclareMathOperator{\ap}{ap} 
\DeclareMathOperator{\mesh}{mesh}%

\newcommand{\mr}{\mathop{\vrule height 1.6ex depth 0pt width
0.13ex\vrule height 0.13ex depth 0pt width 1.3ex}\nolimits}
%
\newcommand{\ora}[1]{{\scriptstyle\xrightarrow{#1}}}
\begin{document}
\title{The weak convergence of varifolds generated 
	by rectifiable flat $G$-chains}
\author{Chunyan Liu, Yangqin Fang, \and Ning Zhang}
\address{CHUNYAN LIU, SCHOOL OF MATHEMATICS AND STATISTICS,
 HUAZHONG UNIVERSITY OF SCIENCE
	AND TECHNOLOGY, 430074, WUHAN, P.R. CHINA}
\email{chunyanliu@hust.edu.cn}
\address{YANGQIN FANG, SCHOOL OF MATHEMATICS AND STATISTICS,
HUAZHONG UNIVERSITY OF SCIENCE
	AND TECHNOLOGY, 430074, WUHAN, P.R. CHINA}
\email{yangqinfang@hust.edu.cn}
\address{NING ZHANG, SCHOOL OF MATHEMATICS AND STATISTICS,
HUAZHONG UNIVERSITY OF SCIENCE
	AND TECHNOLOGY, 430074, WUHAN, P.R. CHINA}
\email{nzhang2@hust.edu.cn}

\renewcommand{\thefootnote}{\fnsymbol{footnote}} 
\footnotetext{\emph{2010 Mathematics Subject Classification:}
	49Q15, 54A20.}
\footnotetext{\emph{Key words:} Varifold; Flat $G$-chain; Convergence}
\renewcommand{\thefootnote}{\arabic{footnote}} 
\date{}
\maketitle
\begin{abstract}
In the present paper, we prove that the convergence 
of rectifiable chains in flat norm implies the 
weak convergence of associated varifolds
if the limit flat chain is rectifiable and the mass 
converges to the mass of limit chain.
\end{abstract}
\section{Introduction} 
In 1960, Federer and Fleming \cite{FF:1960} initially established integral currents to solve
the Plateau's problem, and Fleming \cite{Fleming:1966} extended this theory to flat chains with coefficients in an abelian
group $G$. Setting $G$ a complete normed abelian
group, Fleming \cite{Fleming:1966} obtained that every finite mass
flat $G$-chain in $\mathbb{R}^n$ is rectifiable in case of the coefficient group $G$ is finite. Later, White \cite{White:1999:Acta,
White:1999:Ann} generalized this result to the case of
a coefficient group without nonconstant continuous path of finite length. On the other hand, varifolds was initiated  by Almgren
\cite{Almgren:1965} and extensively developed by W. Allard \cite{Allard:1972} as an alternative notion of surface (an orientation is not needed). The purpose of this paper is to discuss the connection between rectifiable chains and associated varifolds. 

Note that Convergence of flat chains means
convergence with respect to the flat norm, while convergence of varifolds is
weak convergence as Radon measures. Recently, a lot of attention has been attracted to constructing a bridge between these two convergences.
In 2009, White \cite{White:2009} considered the flat convergence of flat $Z_2$-chains associated to integral varifolds, and illustrated that the weak convergence of integral varifolds implied the flat
chains convergence in case of the sequence of integral varifolds satisfying 
the assumptions of Allard's compactness theorem and flat convergence of
boundary flat chains. In general, these two kinds of 
convergence cannot be derived from each other, see Example \ref{ex:1}. 

In this paper, we associate every rectifiable $G$-chain to a rectifiable
varifold, see Definition \ref{def:vf}. Thus, we think about the inverse of White's results \cite{White:2009} with the coefficient group $G$ being a
complete normed abelian group, that is, the weak convergence of rectifiable chains 
deduce the weak convergence of the associated 
rectifiable varifolds under appropriate assumptions. In particular, the main proof in the article is inspired by Fleming's work (see Lemma (8.1) in \cite{Fleming:1966}). However Fleming's technique is not appropriate for our case, so we develop new methods to deal with
it, see Lemma \ref{1.2}.

The paper is organized as follows.
Firstly, we show the implication in case of
polyhedral chains holds when the limit flat chain is
rectifiable and the mass of the sequence tending to the mass of the limit chain, see Theorem \ref{1.4}. Secondly, as an application of Theorem
\ref{1.4}, we get an inequality for the mass of flat chains, see Lemma \ref{1.5}. Thus, we prove that the implication remains true in case of rectifiable chains by
Lemma
\ref{1.2} and Lemma \ref{1.5}, see the Theorem \ref{thm:mainthm}.
Finally, we find the reverse statement is false under our conditions, see Example \ref{ex:1} and
\ref{ex:2}. For readers, we give a detailed proof of Theorem (6.1) and
Theorem (6.5) that appeared in the lecture notes of White \cite{White:2014}, see Lemma \ref{1.1} and \ref{1.3}. Our main results of the paper are as follows.
\begin{theorem}\label{1.4}
	Let $P_m\in\PC_d(\mathbb{R}^n;G)$ and $S\in\RC_d(\mathbb{R}^n;G)$. If $P_m$ converges to $S$ in flat norm and $\mass(P_m)\to\mass(S)$, then $\var(P_m)\wc\var(S)$ as varifolds.
\end{theorem}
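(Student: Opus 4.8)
The plan is to pass to the weak limit $V$ of an arbitrary subsequence of $\var(P_m)$ and to show $V=\var(S)$; since $\mass(\var(P_m))=\mass(P_m)\to\mass(S)$ stays bounded, this yields $\var(P_m)\wc\var(S)$. For bookkeeping, write $P_m=\sum_j g^m_j\llbracket\sigma^m_j\rrbracket$ with non-overlapping oriented $d$-simplices $\sigma^m_j$, so that, by Definition~\ref{def:vf}, $\var(P_m)=\sum_j|g^m_j|\,(\HM^d\mr\sigma^m_j)\otimes\delta_{T^m_j}$ with $T^m_j\in\grass{n}{d}$ the direction of $\sigma^m_j$; and write $S$ with underlying $d$-rectifiable set $M$ and $G$-valued multiplicity $\theta$, so that $\|S\|=|\theta|\,\HM^d\mr M$ and $\var(S)=\int\delta_{T_S(x)}\,\ud\|S\|(x)$ is the rectifiable varifold with weight $\|S\|$ lifted by the approximate tangent-plane map $T_S(x):=\Tan(\|S\|,x)$.

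First I would establish $\|P_m\|\wc\|S\|$ as Radon measures on $\mathbb{R}^n$. Lower semicontinuity of mass in open sets under flat convergence gives $\|S\|(U)\le\liminf_m\|P_m\|(U)$ for every open $U\subseteq\mathbb{R}^n$; combined with $\mass(P_m)\to\mass(S)$ this upgrades to $\|P_m\|(U)\to\|S\|(U)$ whenever $\|S\|(\partial U)=0$, hence the weak convergence and, in particular, tightness of $\{\|P_m\|\}$. Since $\grass{n}{d}$ is compact, $\{\var(P_m)\}$ is then a bounded tight family of Radon measures on $\mathbb{R}^n\times\grass{n}{d}$; let $V$ be the weak limit along some subsequence (not relabelled). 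Projecting to the first factor, $\pi_\#V=\lim_m\|P_m\|=\|S\|$, so $V$ disintegrates as $V=\int\nu_x\,\ud\|S\|(x)$ with each $\nu_x$ a probability measure on $\grass{n}{d}$. The theorem reduces to proving $\nu_x=\delta_{T_S(x)}$ for $\|S\|$-a.e.\ $x$.

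The crux is a projection/squeeze argument turning the mass hypothesis into alignment of directions. Fix $x_0$ in the full-measure set where $\|S\|$ has $d$-density $\theta_0:=|\theta(x_0)|>0$ with approximate tangent plane $T_0:=T_S(x_0)$, where the blow-ups of $S$ at $x_0$ converge in flat norm to the (oriented) plane $T_0$ with multiplicity $\theta(x_0)$, and where $T_S$ is approximately $\|S\|$-continuous. Let $\pi_0$ be orthogonal projection onto $x_0+T_0$. For all but countably many radii $r$ one has $\|S\|(\partial B(x_0,r))=0$, and, after a further extraction preserving $V$ obtained from the flat convergence $P_m\to S$ by slicing the function $|{\cdot}-x_0|$, also $P_m\mr B(x_0,r)\to S\mr B(x_0,r)$ in flat norm. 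Since $\pi_0$ is a $1$-Lipschitz affine map whose $d$-Jacobian on the plane $T$ equals $|\langle\mathbf{e}_{T_0},\mathbf{e}_T\rangle|$, since push-forward does not increase mass, and by lower semicontinuity of mass,
\begin{equation*}
\mass\bigl((\pi_0)_\#(S\mr B(x_0,r))\bigr)\le\liminf_m\mass\bigl((\pi_0)_\#(P_m\mr B(x_0,r))\bigr)\le\liminf_m\int_{B(x_0,r)\times\grass{n}{d}}\bigl|\langle\mathbf{e}_{T_0},\mathbf{e}_T\rangle\bigr|\,\ud\var(P_m).
\end{equation*}
Blowing up $S$ at $x_0$ (and using $\pi_0|_{T_0}=\id$) makes the left-hand side $\ge(1-\varepsilon)\theta_0\omega_dr^d$ for small $r$ ($\omega_d$ the volume of the unit $d$-ball), whereas the integrand is $\le1$ and $\var(P_m)(B(x_0,r)\times\grass{n}{d})=\|P_m\|(B(x_0,r))\to\|S\|(B(x_0,r))\le(1+\varepsilon)\theta_0\omega_dr^d$; subtracting, $\limsup_m\int_{B(x_0,r)}\bigl(1-|\langle\mathbf{e}_{T_0},\mathbf{e}_T\rangle|\bigr)\ud\var(P_m)\le 2\varepsilon\theta_0\omega_dr^d$. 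As $1-|\langle\mathbf{e}_{T_0},\mathbf{e}_T\rangle|\ge c\,\dist(T,T_0)^2$ on the (compact) Grassmannian for a fixed metric $\dist$, testing $\var(P_m)\wc V$ against the lower semicontinuous function $\mathbf{1}_{B(x_0,r)}\dist(\cdot,T_0)^2$ gives $\int_{B(x_0,r)}\dist(T,T_0)^2\,\ud V(x,T)\le C\varepsilon\,\|S\|(B(x_0,r))$.

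Finally, writing the last integral as $\int_{B(x_0,r)}\bigl(\int\dist(T,T_S(x_0))^2\,\ud\nu_x(T)\bigr)\ud\|S\|(x)$, dividing by $\|S\|(B(x_0,r))$ and letting $r\to0$ over good radii shows that the $\|S\|$-averages over $B(x_0,r)$ of $x\mapsto\int\dist(T,T_S(x_0))^2\ud\nu_x(T)$ tend to $0$; using $\bigl|\dist(\cdot,T_S(x_0))^2-\dist(\cdot,T_S(x))^2\bigr|\le C\,\dist(T_S(x_0),T_S(x))$ and the approximate continuity of $T_S$ to replace $T_S(x_0)$ by $T_S(x)$ inside the integral, the Besicovitch differentiation theorem for $\|S\|$ forces $\int\dist(T,T_S(x_0))^2\ud\nu_{x_0}(T)=0$, i.e.\ $\nu_{x_0}=\delta_{T_S(x_0)}$, for $\|S\|$-a.e.\ $x_0$. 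Hence $V=\var(S)$, and since every subsequential limit of $\var(P_m)$ equals $\var(S)$, we conclude $\var(P_m)\wc\var(S)$. I expect the squeeze to be the main obstacle: flat convergence alone controls only the projected multiplicities and is compatible with arbitrarily tilted simplices, and it is precisely the equality $\mass(P_m)\to\mass(S)$ that, through the above sandwich, pins the Gauss directions of $P_m$ to those of $S$; the supporting technical point is the flat-continuity of restriction to most balls, handled by slicing together with a diagonal extraction.
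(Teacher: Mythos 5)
Your overall strategy coincides with the paper's: extract a subsequential weak limit $V$, show the weights converge to $\mu_S$, and then use the orthogonal projection $\pi_0$ onto the approximate tangent plane together with $\mass(P_m)\to\mass(S)$ to squeeze out that the directions of $P_m$ concentrate on $\Tan(M,x)$. Your chain of inequalities $\mass(\pi_{0\sharp}(S\mr\cball(x_0,r)))\le\liminf_m\mass(\pi_{0\sharp}(P_m\mr\cball(x_0,r)))\le\liminf_m\int|\langle\mathbf{e}_{T_0},\mathbf{e}_T\rangle|\,d\var(P_m)$ is exactly the paper's use of its Lemma \ref{1.3}, your bound $1-|\langle\mathbf{e}_{T_0},\mathbf{e}_T\rangle|\ge c\,\dist(T,T_0)^2$ is the paper's inequality $\|(T_x)_\natural-(T_a)_\natural\|^2\le 2(1-\ap J_dp(x))$, and your disintegration-plus-Besicovitch identification of the fibre measures is an equivalent substitute for the paper's varifold-tangent/cone argument. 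These parts are sound.

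The genuine gap is the lower bound $\mass(\pi_{0\sharp}(S\mr\cball(x_0,r)))\ge(1-\varepsilon)\theta_0\omega_dr^d$, which you derive from the assertion that "the blow-ups of $S$ at $x_0$ converge in flat norm to the plane $T_0$ with multiplicity $\theta(x_0)$" at $\|S\|$-a.e.\ $x_0$. For a rectifiable chain with coefficients in a general complete normed abelian group $G$ this is not something you may simply cite: it is precisely the content of the paper's Lemma \ref{1.2} (which states that $\mass(\pi_{x\sharp}(S\mr\cball(x,r)))/\mass(S\mr\cball(x,r))\to1$ for $\mu_S$-a.e.\ $x$), and the paper singles it out in the introduction as the main technical difficulty, proving it by a Vitali covering combined with an explicit deformation $\varphi_\varepsilon$ that retracts $S$ near the tangent planes, with a separate approximation argument when $\mass(\partial S)=\infty$. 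Everything downstream of this point in your proposal is fine, but as written the hardest step of the theorem is assumed rather than proved; you would need to either prove the a.e.\ flat-norm convergence of blow-ups for rectifiable $G$-chains (which is at least as hard as Lemma \ref{1.2}) or establish the projected-mass ratio directly, e.g.\ by reducing to Lipschitz chains via the definition of rectifiability and controlling the error chain on most balls of a Vitali cover.
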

\begin{theorem}\label{thm:mainthm}
Let $S_m,S\in\RC_d(\mathbb{R}^n;G)$ be flat chains of finite mass. If
$S_m$ converges to $S$ in flat norm and $\mass(S_m)\to\mass(S)$, then $\var(S_m)\wc\var(S)$ as varifolds.
\end{theorem}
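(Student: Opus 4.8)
The plan is to deduce the rectifiable case from the polyhedral case, Theorem~\ref{1.4}, by approximating each $S_m$ by polyhedral chains in a way that controls the flat norm, the mass, and the associated varifold all at once. Recall that $\var(S_m)\wc\var(S)$ means $\int\varphi\,\ud\var(S_m)\to\int\varphi\,\ud\var(S)$ for every $\varphi\in C_c(\mathbb{R}^n\times\grass{n}{d})$, and that, by Definition~\ref{def:vf}, each $\var(S)$ is a rectifiable varifold whose weight is the mass measure of $S$ --- so its total mass equals $\mass(S)<\infty$ --- and for which $\int\varphi\,\ud\var(S)$ is the integral of $x\mapsto\varphi(x,T_xM)$ against that weight, $M$ denoting the rectifiable carrier of $S$. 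Fix such a $\varphi$; after splitting it into positive and negative parts and adding a suitable multiple of a cut-off that is $\equiv 1$ near $\spt\varphi$, we may assume $\varphi\ge 0$. Since the conclusion is convergence of a sequence of numbers, it suffices to show that every subsequence of $(S_m)$ has a further subsequence along which $\int\varphi\,\ud\var(S_m)\to\int\varphi\,\ud\var(S)$; thus we may pass to subsequences freely below.

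First I would apply the approximation result underlying Lemma~\ref{1.2}: for each $m$ there is a polyhedral chain $P_m\in\PC_d(\mathbb{R}^n;G)$ whose flat distance to $S_m$ is at most $1/m$, with $|\mass(P_m)-\mass(S_m)|\le 1/m$ and $\bigl|\int\varphi\,\ud\var(P_m)-\int\varphi\,\ud\var(S_m)\bigr|\le 1/m$. The idea is to decompose the carrier of $S_m$ into small rectifiable pieces on which $x\mapsto T_xM$ is nearly constant, replace $S_m$ on each piece by a polyhedral chain almost parallel to the corresponding $d$-plane and carrying almost the same mass, and use the uniform continuity of $\varphi$ on $\mathbb{R}^n\times\grass{n}{d}$; the localized mass inequality of Lemma~\ref{1.5} --- itself deduced from Theorem~\ref{1.4} --- is what prevents the mass of $P_m$ from undershooting. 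Since $S_m\to S$ in flat norm, the triangle inequality for the flat norm gives $P_m\to S$ in flat norm, and $\mass(P_m)\to\mass(S)$ follows from $\mass(S_m)\to\mass(S)$. Hence Theorem~\ref{1.4} applies to $(P_m)$ and yields $\var(P_m)\wc\var(S)$, in particular $\int\varphi\,\ud\var(P_m)\to\int\varphi\,\ud\var(S)$.

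Combining this with $\bigl|\int\varphi\,\ud\var(P_m)-\int\varphi\,\ud\var(S_m)\bigr|\le 1/m$ gives $\int\varphi\,\ud\var(S_m)\to\int\varphi\,\ud\var(S)$. The masses $\mass(S_m)$ are bounded since they converge to $\mass(S)<\infty$, so the varifolds $\var(S_m)$ are Radon measures on $\mathbb{R}^n\times\grass{n}{d}$ of uniformly bounded total mass; for such a sequence weak-$*$ convergence can be verified on a single countable family of test functions chosen in advance, so running the previous two paragraphs for each member of that family and diagonalizing gives $\var(S_m)\wc\var(S)$. The step I expect to be the main obstacle is the construction behind Lemma~\ref{1.2}: flat-norm convergence carries no information about how the tangent planes of $S_m$ are spread over $\grass{n}{d}$ --- the classical Federer--Fleming deformation onto a cubical grid, say, annihilates tangent directions --- so the polyhedral approximants must be assembled from suitably tilted polyhedra whose locally constant $d$-planes follow the approximate tangent planes of $S_m$ closely enough that $\int\varphi\,\ud\var(\cdot)$ is essentially preserved, while the flat distance stays small and, crucially, the mass is not allowed to decrease; this last point is exactly the bookkeeping done by Lemma~\ref{1.5}. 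A comparatively routine matter still to be handled is checking that all the varifolds in play are genuine Radon measures on the Grassmann bundle with finite total mass, which is what makes the reduction to a countable dense family of test functions legitimate.
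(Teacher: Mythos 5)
Your overall reduction --- approximate each $S_m$ by a polyhedral chain $P_m$ that is close to it simultaneously in flat norm, in mass, and in the varifold sense, then invoke Theorem~\ref{1.4} for the sequence $(P_m)$ --- is a genuinely different route from the paper's. The paper does not approximate at all: it reruns the blow-up argument of Theorem~\ref{1.4} verbatim with the rectifiable chains $S_m$ in place of the polyhedral $P_m$, observing that the only point where polyhedrality was used is the Jacobian bound of Lemma~\ref{1.3}, and that for the orthogonal projection $p$ (which is linear, hence $C^1$) the same bound is available for rectifiable chains via Lemma~\ref{1.5}.

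As written, however, your proof has a genuine gap exactly where you flag ``the main obstacle'': the existence, for each $m$, of a polyhedral $P_m$ with $\fn(P_m-S_m)\le 1/m$, $|\mass(P_m)-\mass(S_m)|\le 1/m$ and $|\var(P_m)(\varphi)-\var(S_m)(\varphi)|\le 1/m$ is asserted but not proved, and it is not ``the approximation result underlying Lemma~\ref{1.2}'': that lemma contains no polyhedral approximation; it is a density statement about the mass of projections of $S\mr\cball(x,r)$ onto approximate tangent planes. The hand-construction you sketch (tilted polyhedra tracking the tangent planes of $S_m$, with Lemma~\ref{1.5} preventing mass loss) is a substantial piece of work that is nowhere carried out, and flat convergence alone indeed gives no control on tangent directions, as you note. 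The good news is that you do not need it: for fixed $m$, take any polyhedral sequence $P_{m,\ell}\ora{\fn} S_m$ with $\mass(P_{m,\ell})\to\mass(S_m)$ (such a sequence exists by the definition of mass for finite-mass flat chains, after a diagonal extraction); Theorem~\ref{1.4}, applied with $S_m$ itself as the rectifiable limit, already yields $\var(P_{m,\ell})\wc\var(S_m)$ as $\ell\to\infty$. Since all total masses involved are uniformly bounded, the weak topology is metrizable on the relevant ball of Radon measures on $\mathbb{R}^n\times\grass{n}{d}$, and the diagonal extraction used in the paper's proof of Lemma~\ref{1.1} (run for varifolds instead of weight measures) produces exactly the chains $P_m=P_{m,\ell_m}$ your argument requires; your second and third paragraphs then go through unchanged. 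With that substitution your proof is correct and self-contained given Theorem~\ref{1.4}; without it, the central approximation step is unsupported.
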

For any compact set $X\subseteq \mathbb{R}^n$, we define $\FC_d(X;G)=\{S\in
\FC_d(\mathbb{R}^n;G):\spt S\subseteq X\}$ and $\RC_d(X;G)=\{S\in
\RC_d(\mathbb{R}^n;G):\spt S\subseteq X\}$. The following corollary directly follows from Theorem \ref{thm:mainthm}.
\begin{corollary}
	Let $X\subseteq \mathbb{R}^n$ be a compact set, and let $T\in \FC_{d-1}(X;G)$ be
	a flat chain. If $\{S_m\}\subseteq \RC_d(X;G)$ is a sequence of rectifiable
	chains such that $\partial S_m=T$, 
	\[
	\mass(S_m)\to \inf\{\mass(S):\partial
	S=T, S\in \RC_d(X;G)\}.
	\]
	and $S_m\ora\fn S$, then $\var(S_m)\to \var(S)$.
\end{corollary}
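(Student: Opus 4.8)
The plan is to obtain the corollary as an application of Theorem~\ref{thm:mainthm}: it suffices to show that the flat limit $S$ again belongs to $\RC_d(X;G)$, has boundary $T$, and that $\mass(S_m)\to\mass(S)$; the asserted varifold convergence then follows at once.

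First I would extract the soft consequences of flat convergence. The boundary operator does not increase the flat norm, so $\fn(\partial S_m-\partial S)=\fn(\partial(S_m-S))\le\fn(S_m-S)\to 0$; since $\partial S_m=T$ for every $m$, this forces $\partial S=T$, and in particular $S$ is a flat $d$-chain with the prescribed boundary. Moreover, as $\spt S_m\subseteq X$ for every $m$ and $X$ is compact, the limit satisfies $\spt S\subseteq X$ (flat limits of chains supported in a fixed compact set remain supported there). Finally, the sequence $\{\mass(S_m)\}_m$ is bounded, because it converges to $\inf\{\mass(S'):\partial S'=T,\ S'\in\RC_d(X;G)\}$, a finite number since each $S_m$ is an admissible competitor; hence, by lower semicontinuity of mass under flat convergence,
\[
\mass(S)\le\liminf_m\mass(S_m)=\inf\{\mass(S'):\partial S'=T,\ S'\in\RC_d(X;G)\}<\infty .
\]

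Now that $S$ has finite mass, I would invoke the structure theory for finite-mass flat $G$-chains to conclude that $S$ is rectifiable; together with $\spt S\subseteq X$ this gives $S\in\RC_d(X;G)$. But then $S$ is itself an admissible competitor with $\partial S=T$, so $\mass(S)\ge\inf\{\mass(S'):\partial S'=T,\ S'\in\RC_d(X;G)\}=\lim_m\mass(S_m)$, and comparing with the displayed inequality yields $\mass(S)=\lim_m\mass(S_m)$. At this point $S_m,S\in\RC_d(\mathbb{R}^n;G)$ are flat chains of finite mass with $S_m\ora{\fn}S$ and $\mass(S_m)\to\mass(S)$, so Theorem~\ref{thm:mainthm} applies and gives $\var(S_m)\wc\var(S)$.

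The only point carrying real content is the rectifiability of the flat limit $S$ (equivalently, that the infimum in the statement is attained at $S$): once that is granted, the argument is a purely formal sandwiching of masses between lower semicontinuity and the definition of the infimum, and all analytic substance is supplied by Theorem~\ref{thm:mainthm}. If the paper's standing conventions on $G$ already make membership $S\in\RC_d(X;G)$ automatic, the proof reduces to this bookkeeping together with a citation of Theorem~\ref{thm:mainthm}.
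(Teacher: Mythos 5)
Your overall strategy is exactly the intended one: the paper offers no proof beyond the remark that the corollary ``directly follows'' from Theorem \ref{thm:mainthm}, and your mass sandwich (lower semicontinuity gives $\mass(S)\le\liminf_m\mass(S_m)=\inf\{\mass(S'):\partial S'=T,\ S'\in\RC_d(X;G)\}$, while admissibility of $S$ as a competitor gives the reverse inequality) is precisely the bookkeeping that makes that remark precise. The boundary and support observations are also fine.

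The one step that does not go through as written is the appeal to ``the structure theory for finite-mass flat $G$-chains'' to conclude that $S$ is rectifiable. The paper's standing hypothesis is only that $G$ is a complete normed abelian group, and under that hypothesis finite mass does \emph{not} imply rectifiability: as the paper's own introduction recalls, Fleming's rectifiability theorem requires $G$ finite, and White's extension requires that $G$ admit no nonconstant continuous path of finite length (for $G=\mathbb{R}$ with the usual norm, finite-mass flat chains behave like normal currents and can be diffuse). So rectifiability of the flat limit $S$ cannot be extracted from finiteness of mass alone in this generality. The honest reading of the corollary is that $S\in\RC_d(X;G)$ is part of the hypotheses --- indeed $\var(S)$ in the conclusion is only defined for rectifiable chains --- and with that reading your argument is complete; alternatively one must impose White's condition on $G$ to justify the rectifiability step. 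You half-acknowledge this in your closing sentence, but the proof as written presents rectifiability of $S$ as a theorem rather than as an assumption, and that is the gap to repair.
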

\section{Definitions and Preliminaries}
We collect some definitions and basic results that we use throughout the paper. For further facts we refer the reader to 
the books by Royden \cite{royden2017real}, Whitney \cite{Whitney:1957}, Federer \cite{Federer:1969} and the article by  Fleming \cite{Fleming:1966}.

The space of Radon measures  $\mathcal{M}(\mathbb{R}^n)$ is equipped with a {\it weak topology}, that is, $\mu_m\wc
\mu$ if and only if $\mu_m(\varphi)\to \mu(\varphi)$ for 
any $\varphi\in C_c(\mathbb{R}^n,\mathbb{R})$. 
Then the weak topology on unit ball in $\mathcal{M}(\mathbb{R}^n)$ is metrizable, see \cite[Section
15.4]{royden2017real}.

Let $d$ be a positive integer.
For any set $E\subseteq \mathbb{R}^n$, the  
{\it $d$-dimensional Hausdorff measure}
$\HM^d(E)$ is defined by 
\[
\HM^d(E)=\lim\limits_{\delta\to0}\inf
\left\{\sum\diam(U_{i})^d:E \subseteq\bigcup
U_{i}, U_i\subseteq\mathbb{R}^n, \diam(U_i)\leq\delta\right\}.
\]
For any Radon measure $\mu$ on $\mathbb{R}^n$, the {\it $d$-density} of $\mu$ at 
$a\in\mathbb{R}^n$ is defined by
\[
\Theta^d(\mu,a)=\lim\limits_{r\to0}
\frac{\mu(\cball(a,r))}{\omega_dr^d}.
\]
Here, $\omega_d$ is the $d$-dimensional
Hausdorff measure of the unit ball in $\mathbb{R}^d$.

A {\it normed abelian group} is an abelian group $G$ 
equipped with a norm
$\left|\cdot\right|:G\to[0,+\infty)$ 
satisfying
\begin{enumerate}
\item $\left|-g\right|=\left|g\right|$,
\item $\left|g+h\right|\leq\left|g\right|
+\left|h\right|$,
\item $\left|g\right|=0\text{ if and only if }g=0$.
\end{enumerate}
A normed abelian group $G$ is {\it complete} if it is complete 
with respect to the metric induced by the norm.

The group of 
{\it polyhedral chains} of dimension $d$ 
in $\mathbb{R}^n$, with coefficients in $G$, denoted by  $\PC_d(\mathbb{R}^n;G)$, is a collection of elements consist of
$\sum_{i=1}^n g_i\Delta_i$ with $g_i\in G$ 
and $\Delta_i$
are polyhedra of dimension $d$. The {\it mass} of $P$ 
is defined by
\[
	\mass(P)=\inf\left\{\sum\limits_{i=1}^{n}\left|g_i\right|
	\HM^d(\Delta_i):P=\sum_{i=1}^n g_i\Delta_i\right\}
\]
The  {\it Whitney flat norm} on $\PC_d(\mathbb{R}^n;G)$ 
is defined by 
\[
\fn(P)=\inf\left\{\mass(Q)+\mass(R):P=Q+\partial
R,Q\in\PC_d(\mathbb{R}^n;G),R\in\PC_{d+1}
(\mathbb{R}^n;G)\right\}.
\]
The group of {\it flat chains} of dimension $d$, $\FC_d(\mathbb{R}^n;G)$, is the completion of 
$\PC_d(\mathbb{R}^n;G)$ with respect to the Whitney flat norm $\fn$. For any $S\in\FC_d(\mathbb{R}^n;G)$, we denote by $P_i\ora\fn S$ a sequence of polyhedral chains $\{P_i\}$ converging to $S$ in Whitney flat norm
$\fn$. Then the mass of $S$ is defined by 
\[
\mass(S)=\inf\left\{\liminf\limits_{i\to\infty}
\mass(P_i):P_i\ora\fn S,P_i\in\PC_d(\mathbb{R}^n;G)
\right\}.
\]

A flat chain $S$ is
supported by a closed set $X$ if for every open set $U$
containing $X$, there is a sequence of polyhedral chains 
$\left\{P_i\right\}$ tending to $S$ with cells of each $P_i$ contained in $U$. The {\it support of $S$}, denoted by $\spt S$, is the smallest closed set $X$ supporting $S$, if it exits.

Let $\mathcal{K}$ be a $d$-simplicial complex in $\mathbb{R}^n$. Then the {\it mesh} of $\mathcal{K}$ is the maximal diameter of all simplexes in $\mathcal{K}$, i.e.
\[
	\mesh(\mathcal{K})=\max\left\{\diam(\sigma):\sigma
	\text{ is a simplex of the complex }\mathcal{K}\right\},
\]
while the {\it fullness} $\kappa(\sigma)$ of $d$-simplex $\sigma$ is defined by 
\[
	\kappa(\sigma)=\Vol(\sigma)/\diam(\sigma)^d.
\]

Let $\sigma=p_0\cdots p_d$ be a simplex, with its vertices given in 
the order shown; we construct its {\it standard subdivision} $\mathfrak{S}\sigma$ as follows. Set
\[
	p_{ij}=\frac{1}{2}(p_i+p_j), i\leq j,
\]
in particular, $p_{ii}=p_{i}$. These are the vertices of $\mathfrak{S}\sigma$. Define a 
partial ordering among these vertices by setting
\[
p_{ij}\leq p_{kl}, \text{ if }k\leq i,j\leq l.
\]
The simplexes of $\mathfrak{S}\sigma$ are all those formed from the
$p_{ij}$ in increasing order. Clearly the interiors of
these simplexes are disjoint; it is not hard to see that they actually form a simplical complex, which is a subdivision of $\sigma$.

If $\mathcal{K}$ is a simplicial complex, and we order its vertices in some fixed fashion, then $\mathfrak{S}_1\mathcal{K}$ may be 
formed by subdividing each of its simplexes as above. Now each simplex of $\mathfrak{S}_1\mathcal{K}$ has its vertices ordered, and 
we may subdivide again, forming $\mathfrak{S}_2\mathcal{K}$, etc. Then we form the sequence of standard subdivisions of $\mathcal{K}$,
relative to the given order of the vertices. Thus, there is a positive number
$\eta=\eta(\mathcal{K})>0$, such that $\kappa(\tau)\ge\eta$ for all simplexes
$\tau\in\mathfrak{S}_m\mathcal{K}$ and $\mesh(\mathfrak{S}_m\mathcal{K})\to 0$, see \cite[p359]{Whitney:1957}.

For any Lipschitz mapping $f: X\to\mathbb{R}^n$, $X\subseteq \mathbb{R}^n$, we
denote by $\ap J_d f$ the $d$-approximately Jacobian of $f$, see
\cite[Theorem 3.2.22]{Federer:1969} for details.

Let $\mathcal{K}$ be a $d$-simplicial complex. If $p_1,p_2,\cdots $ are the vertices of $\mathcal{K}$, then each simplex of $\mathcal{K}$ is of the form $p_{\lambda_{0}}\cdots p_{\lambda_{d}}$. Each point of $\mathcal{K}$ can be written uniquely in the form 
\[
p=\sum u_i(p)p_i, u_i(p)\ge0, \sum u_i(p)=1,
\]
with the conditon that if $p\in p_{\lambda_{0}}\cdots p_{\lambda_{d}}$, then $u_j(p)=0$ for $j\ne\lambda_0,\cdots,\lambda_{r}$. 
Let $f$ be a mapping of $\mathcal{K}$ into $\mathbb{R}^n$. For each point $p=\sum u_i(p)p_i$ of $\mathcal{K}$ as above, the corresponding {\it simplexwise affine approximation} $\bar{f}$ to $f$ is defined by 
\[
\bar{f}(p)=\bar{f}(\sum u_i(p)p_i)=\sum u_i(p)f(p_i),
\]
which is same as $f$ on each vertex of $\mathcal{K}$,
and is affine in each simplex of $\mathcal{K}$,
see \cite{Whitney:1957} for details.
\begin{lemma}\label{le:affapp}
Let $f$ be a Lipschitz mapping of $d$-polyhedron $\Delta\subseteq\mathbb{R}^n$ into $\mathbb{R}^n$. Then there is a
$\eta_0>0$ such that, for any $\varepsilon>0$, we can find $\xi>0$
satisfying that for any simplicial subdivision of $\Delta$ with 
the fullness greater than $\eta_0$ and mesh less than $\xi$,
\[
\int_{\Delta}\left|\ap J_d\bar{f}(x)-\ap J_df(x)\right|
d\HM^d(x)\leq\varepsilon,
\]
where $\bar{f}$ is the corresponding simplexwise affine approximation
to $f$ as above.
\end{lemma}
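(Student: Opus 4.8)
\medskip
\noindent\emph{Proof strategy.} The plan is to isolate one quantitative statement about affine interpolation on non\-/degenerate simplices, and then combine it with Rademacher's and Egorov's theorems. Since $\Delta$ is a finite union of $d$\-/simplices with pairwise disjoint interiors and any simplicial subdivision of $\Delta$ restricts to one of each of them, it suffices to treat the case in which $\Delta$ is a single $d$\-/simplex; identifying its affine hull with $\mathbb{R}^{d}$, I regard $f$ as a Lipschitz map $\Delta\to\mathbb{R}^{n}$ with constant $L:=\Lip(f)$. By Rademacher's theorem $f$ is classically differentiable at $\HM^{d}$\-/a.e.\ point, where the approximate derivative coincides with $Df$; writing $\mathcal{J}(A):=\sqrt{\det(A^{*}A)}$ for an $n\times d$ matrix $A$ one has $\ap J_{d}f(x)=\mathcal{J}(Df(x))$ and $0\le\ap J_{d}f\le L^{d}$. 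The interpolant $\bar f$ is Lipschitz on $\Delta$ and agrees across shared faces (being determined by the common vertex values), and on the interior of each simplex $\sigma$ of the subdivision $D\bar f$ is the constant linear part of the affine piece, so $\ap J_{d}\bar f=\mathcal{J}(D\bar f|_{\sigma})$ there; the union of the simplex boundaries is $\HM^{d}$\-/null and may be discarded.

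Fix $\eta_{0}>0$ once and for all (any positive value is allowed, the constants below merely worsening as $\eta_{0}\to0^{+}$). The key estimate is: if $\sigma=p_{0}\cdots p_{d}$ has $\kappa(\sigma)\ge\eta_{0}$ and $g\colon\sigma\to\mathbb{R}^{n}$ is affine, then, expanding $g=\sum_{i}g(p_{i})u_{i}$ in barycentric coordinates and using $\sum_{i}\nabla u_{i}=0$, one has $Dg=\sum_{i\ge1}(g(p_{i})-g(p_{0}))(\nabla u_{i})^{*}$; since $|\nabla u_{i}|$ is the reciprocal of the $i$\-/th altitude of $\sigma$, which is at least a dimensional multiple of $\kappa(\sigma)\diam(\sigma)$, there is $C_{0}=C_{0}(d)$ with
\[
\|Dg\|\le\frac{C_{0}}{\eta_{0}}\cdot\frac{\max_{i}|g(p_{i})-g(p_{0})|}{\diam(\sigma)}.
\]
Applied with $g=\bar f|_{\sigma}$ this gives $\|D\bar f|_{\sigma}\|\le C_{0}\eta_{0}^{-1}L$, hence $\ap J_{d}\bar f\le(C_{0}\eta_{0}^{-1}L)^{d}$; applied with $g=\bar f|_{\sigma}-L_{x}$, where $L_{x}(z):=f(x)+Df(x)(z-x)$ and $x\in\sigma$ is a point of differentiability of $f$, and noting $g(p_{i})=f(p_{i})-f(x)-Df(x)(p_{i}-x)$, it gives
\[
\|D\bar f|_{\sigma}-Df(x)\|\le\frac{2C_{0}}{\eta_{0}}\cdot\frac{\max_{i}|f(p_{i})-f(x)-Df(x)(p_{i}-x)|}{\diam(\sigma)}.
\]
As $\mathcal{J}$ is smooth it is Lipschitz on the ball $\{\|A\|\le\rho_{0}\}$ with $\rho_{0}:=(1+C_{0}\eta_{0}^{-1})L$, with some constant $C_{1}=C_{1}(n,d,L,\eta_{0})$; since $\|Df(x)\|\le L\le\rho_{0}$ and $\|D\bar f|_{\sigma}\|\le\rho_{0}$, setting $C_{2}:=2C_{0}\eta_{0}^{-1}C_{1}$ yields, for a.e.\ $x\in\Delta$ with $\sigma$ the simplex whose interior contains $x$,
\[
|\ap J_{d}\bar f(x)-\ap J_{d}f(x)|\le C_{2}\cdot\frac{\max_{i}|f(p_{i})-f(x)-Df(x)(p_{i}-x)|}{\diam(\sigma)}.
\]

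Finally I would invoke Egorov's theorem. Put $B:=2\rho_{0}^{d}$, $M:=\HM^{d}(\Delta)$, and let $\varepsilon>0$. For $\HM^{d}$\-/a.e.\ $x$ the quantities $\sup\{|f(y)-f(x)-Df(x)(y-x)|/|y-x|:y\in\Delta,\ 0<|y-x|\le t\}$ are nonincreasing in $t$ and tend to $0$ as $t\downarrow0$, so by Egorov's theorem there is a compact $K\subseteq\Delta$ with $\HM^{d}(\Delta\setminus K)<\varepsilon/(2B)$ on which the decay is uniform; let $\omega(t)\to0$ denote the corresponding supremum over $x\in K$. Choose $\xi>0$ with $C_{2}\,\omega(\xi)\,M<\varepsilon/2$. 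Then for any simplicial subdivision of $\Delta$ with fullness $>\eta_{0}$ and mesh $<\xi$: for a.e.\ $x\in K$, with $\sigma$ the simplex whose interior contains $x$, every vertex $p_{i}$ of $\sigma$ satisfies $|p_{i}-x|\le\diam(\sigma)<\xi$, so $|f(p_{i})-f(x)-Df(x)(p_{i}-x)|\le\omega(\xi)|p_{i}-x|\le\omega(\xi)\diam(\sigma)$, and the last display gives $|\ap J_{d}\bar f(x)-\ap J_{d}f(x)|\le C_{2}\,\omega(\xi)$; on $\Delta\setminus K$ we use the crude bound $|\ap J_{d}\bar f-\ap J_{d}f|\le\ap J_{d}\bar f+\ap J_{d}f\le B$. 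Integrating,
\[
\int_{\Delta}|\ap J_{d}\bar f-\ap J_{d}f|\,d\HM^{d}\le C_{2}\,\omega(\xi)\,M+B\,\HM^{d}(\Delta\setminus K)<\frac{\varepsilon}{2}+\frac{\varepsilon}{2}=\varepsilon,
\]
as required. I expect the only genuine obstacle to be the fullness\-/stability estimate of the second paragraph: it is exactly what converts ``$\bar f$ agrees with $f$ at nearby vertices'' into ``$D\bar f$ is close to $Df$'', and it degenerates without a lower bound on $\kappa(\sigma)$, which is why the hypothesis is stated with a threshold $\eta_{0}$; everything downstream is soft measure theory.
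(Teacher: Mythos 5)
Your proof is correct, and it reaches the estimate by a genuinely different technical route than the paper, though the overall architecture (a pointwise Jacobian comparison on a ``good'' set, a crude fullness-controlled $L^\infty$ bound on a small exceptional set, then integration) is the same. Where the paper follows Whitney closely --- invoking Lemma 2a and the wedge-product difference inequality (I,~12.17) to compare $w_{i1}\wedge\cdots\wedge w_{id}$ with $u_{i1}\wedge\cdots\wedge u_{id}$, and uniformizing via a Lusin-type compact set $E$ on which $Df$ is continuous --- you derive the key bound $\|D\bar f|_\sigma-Df(x)\|\lesssim \eta_0^{-1}\max_i|f(p_i)-f(x)-Df(x)(p_i-x)|/\diam(\sigma)$ from scratch using barycentric-coordinate gradients, and you uniformize the first-order Taylor remainder directly with Egorov. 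Your version is more self-contained (no citations to Whitney's book) and in fact proves slightly more, since it works for an arbitrary prescribed $\eta_0>0$ rather than the specific $\eta(\Delta)$ coming from standard subdivisions; the paper's version stays closer to the machinery it needs anyway for the pushforward $f_\sharp P$. One small repair: $\mathcal{J}(A)=\sqrt{\det(A^*A)}$ is \emph{not} smooth where it vanishes, so ``smooth hence locally Lipschitz'' is the wrong justification; the correct one is the representation $\mathcal{J}(A)=\left|Ae_1\wedge\cdots\wedge Ae_d\right|$, which is multilinear up to a norm and hence Lipschitz on bounded sets --- this is exactly the estimate the paper extracts from Whitney. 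With that fix, and a one-line remark that $x\mapsto\omega_x(1/k)$ is measurable (take the supremum over a countable dense set of $y$), the argument is complete.
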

\begin{proof}
We take $\eta_0=\eta(\Delta)>0$ as above, and choose  
$\rho\in(0,1)$ such that
\[
\frac{6\rho\Lip(f)^d\HM^d(\Delta)}{(d-1)!\eta_0}\leq
\frac{\varepsilon}{3},~
\frac{(((d-1)!\eta_0)^d+1)\rho^d\Lip(f)^d\HM^d(\Delta)}
{((d-1)!\eta_0)^d}\leq\frac{\varepsilon}{3}.
\] 
Set 
\[
\gamma=((d-1)!\eta_0)^d\rho^d\varepsilon/3(((d-1)!\eta_0)^d+1)
\Lip(f)^d.
\]
Since $\Delta$ is a $d$-rectifiable set of $\mathbb{R}^n$,
$f$ is differentiable for $\HM^d$-
a.e. $x\in\Delta$. We may choose a compact set
$E\subseteq \Delta$ such that $Df$ is continuous in $E$ and $\HM^d(\Delta\setminus E)<\gamma$. Choose $\xi>0$ such that
\[
\left|Df(x)-Df(y)\right|\leq\rho\Lip(f)~~\text{ if }
x,y\in E,~\left|x-y\right|\leq\xi.
\]
Now we may suppose that $\Delta$ is a cell, and $\mathfrak{S}\Delta=
\cup_i\Delta_{i}$ is any simplicial subdivision 
satisfying the above conditions. Define $F=\Delta\setminus E$, 
reorder the sequence $\left\{\Delta_i\right\}$
so that there is a natural number $I$
such that $i\leq I$, $\HM^d(\Delta_i\cap F)\leq\rho^d\HM^d(\Delta_i)$.
Otherwise, the inequality fails for $i>I$. 
	
For each $d$-simplex $\Delta_i,~i\leq I$, if $x\in\Delta_i\cap E$, 
we set
\[
\Delta_i=p_{i0}\cdots p_{id},~v_{ij}=p_{ij}-p_{i0},~ 
f_i=f|_{\Delta_i},~\bar{f}_i=\bar{f}|_{\Delta_i},
\]
and
\[
u_{ij}=Df_i(x)(v_{ij}),~w_{ij}=f(p_{ij})-f(p_{i0})=D\bar{f}_i(x)(v_{ij}),
~\delta_i=\diam(\Delta_i).
\]
By Lemma 2a in \cite{Whitney:1957}, we see that
\[
\left|w_{ij}-u_{ij}\right|\leq6\rho\Lip(f)\delta_i,~
\left|u_{ij}\right|\leq\Lip(f)\delta_i,~
\left|w_{ij}\right|\leq\Lip(f)\delta_i.
\]
By \cite[(I,~12.17)]{Whitney:1957},
\[
\left|w_{i1}\wedge\cdots\wedge w_{id}-u_{i1}\wedge\cdots
\wedge u_{id}\right|<6d\rho\Lip(f)^d\delta_i^d.
\]
Set the unit $d$-vector
\[
\alpha_i=\frac{v_{i1}\wedge\cdots\wedge v_{id}}{|v_{i1}\wedge\cdots \wedge v_{id}|}
\]
By \cite[(III,~1.2)]{Whitney:1957},
\[\begin{aligned}
\big|\ap J_d\bar{f}_i(x)-\ap J_df_i(x)\big|&\leq\big|\wedge_dD\bar{f}_i(x)(\alpha_i)-\wedge_dDf_i(x)(\alpha_i)\big|\\&=\frac{\big|\wedge_dD\bar{f}_i(x)(v_{i1}\wedge\cdots\wedge v_{id})-\wedge_dDf_i(x)(v_{i1}\wedge\cdots\wedge v_{id})\big|}{|v_{i1}\wedge\cdots \wedge v_{id}|}
\\&=\frac
{\left|w_{i1}\wedge\cdots\wedge w_{id}-u_{i1}\wedge
\cdots\wedge u_{id}\right| }{d!\Vol(\Delta_i)}\\&\leq\frac{6d\rho\Lip(f)^d\delta_i^d}
{d!\Vol(\Delta_i)}=\frac{6\rho\Lip(f)^d}
{(d-1)!\kappa(\Delta_i)}.
\end{aligned}\]
Hence
\[\begin{aligned}
\int_{\Delta_i\cap E}\left|\ap J_d\bar{f}_i(x)-\ap 
J_df_i(x)\right|
d\HM^d(x)&\leq\frac{6\rho\Lip(f)^d\HM^d(\Delta_i)}{(d-1)!\eta_0}
\leq\frac{\varepsilon\HM^d(\Delta_i)}{3\HM^d(\Delta)}.
\end{aligned}\]
For any $x\in \Delta_i$, since $\ap J_d f_i(x)\leq \Lip(f)^d$ and 
\[
\ap J_d
\bar{f}_i(x)\leq \Lip(\bar{f})^d\leq \Lip(f)^d/[(d-1)!
\kappa(\Delta_i)]^d,
\]
Hence
\[
\int_{\Delta_i\cap F}\left|\ap J_d\bar{f}_i(x)-
\ap J_df_i(x)\right|d\HM^d(x)\leq
\frac{\Lip(f)^d(((d-1)!\eta_0)^d+1)\rho^d\HM^d(\Delta_i)}
{((d-1)!\eta_0)^d}
\leq\frac{\varepsilon\HM^d(\Delta_i)}
{3\HM^d(\Delta)}.
\]
In addition, set $\bar{\Delta}=\Delta_1\cup\cdots\cup\Delta_I$,
$\tilde{\Delta}=\Delta_{I+1}\cup\cdots$. 
If $i>I$, the definition of $\tilde{\Delta}$ gives 
\[
\rho^d\HM^d(\tilde{\Delta})<\sum_{i>I}
\HM^d(\Delta_i\cap F)\leq\HM^d(F)<\gamma.
\]
Thus
\[
\int_{\bar{\Delta}}\left|\ap J_d\bar{f}(x)-
\ap J_df(x)\right|d\HM^d(x)\leq \frac{2\varepsilon}{3},
\]
and
\[
\int_{\tilde{\Delta}}\left|\ap J_d\bar{f}(x)-
\ap J_df(x)\right|d\HM^d(x)\leq
\frac{(((d-1)!\eta_0)^d+1)\Lip(f)^d\gamma}{\rho^d[(d-1)!\eta_0]^d}=
\frac{\varepsilon}{3}.
\]
Hence
\[\int_{\Delta}\left|\ap J_d\bar{f}(x)-
\ap J_df(x)\right|d\HM^d(x)\leq\varepsilon.\]
\end{proof}

Let $P\in\PC_d(\mathbb{R}^n;G)$ be a polyhedral $d$-chain, and let $f$ be a Lipschitz mapping of $E=\spt P$ into $\mathbb{R}^n$. We define the corresponding Lipschitz mapping $f_{\sharp}P$ as follows. 
Let $\left\{\mathfrak{S}_kE\right\}$ be a sequence of simplicial subdivision of $E$ as above, 
and let $\left\{f_k\right\}$ be the
corresponding simplexwise affine mappings of $E$ into
$\mathbb{R}^n$. The limit of $\left\{f_{k\sharp}P\right\}$ exists and unique, see \cite[p296]{Whitney:1957} for details. Set
\[
f_{\sharp}P=\lim\limits_{k\to\infty}f_{k\sharp}P.
\]

Let $f$ be a Lipschitz mapping of $\mathbb{R}^n$ into $\mathbb{R}^n$. Then $f$ induces a chain map $f_{\sharp}$ of $\FC_d(\mathbb{R}^n;G)$ into $\FC_d(\mathbb{R}^n;G)$ as follows.
Let $S$ be a flat $d$-chain of $\FC_d(\mathbb{R}^n;G)$, and let
$\left\{P_m\right\}$ be a sequence of polyhedral $d$-chains 
 such that $P_m\ora\fn S$. We see that 
$\{f_{\sharp}P_m\}$ is a Cauchy sequence in the flat 
norm by the fact 
\[
	\fn(f_{\sharp}(P_i-P_j))\leq\max
	\left(\Lip(f)^d,\Lip(f)^{d+1}\right)\fn(P_i-P_j);
\]
we shall define
\[
	f_{\sharp}S=\lim\limits_{m\to\infty}f_{\sharp}P_m.
\]

To show that the limit unique, consider another sequence $\left\{Q_i\right\}$ of polyhedral $d$-chains  
which has the above property, thus
\[
\fn(f_{\sharp}(Q_m-P_m))\leq\max\left(\Lip(f)^d,\Lip(f)^{d+1}\right)
\fn(Q_m-P_m)\to 0.
\]

A flat chain $S\in\FC_d(\mathbb{R}^n;G)$ is called 
rectifiable if for each
$\varepsilon>0$ there exists a polyhedral $d$-chain 
$P\in \PC_d(\mathbb{R}^n;G)$ 
and a Lipschitz mapping 
$f:\mathbb{R}^n\to\mathbb{R}^n$ such that 
\[
\mass(S-f_{\sharp}P)<\varepsilon.
\]
We denote by $\RC_d(\mathbb{R}^n;G)$ the collection of 
all rectifiable $d$-chains,
and by $\mathscr{M}_d(\mathbb{R}^n;G)$ the collection of 
flat $d$-chains with finite mass. 

Let $P=\sum_{i=1}^{n}g_i\Delta_i$ be a polyhedral $d$-chain,
and let $I$ be an open interval. We define 
$P\mr I$ by the portion of $P$ in $I$, i.e. 
\[
P\mr I=\sum_{i=1}^{n}g_i(\Delta_i\cap I).
\]

For any polyhedral chain $P\in \PC_d(\mathbb{R}^n;G)$, there is
a Radon measure $\mu_P$ associated to the chain $P$, which is given by
$\mu_P(I)=\mass(P\mr I)$ for any $d$-dimensional interval $I$.

\begin{lemma}
For any flat chain $S\in\mathscr{M}_d(\mathbb{R}^n;G)$, if
$\{P_m\},\{Q_m\}\subseteq \PC_d(\mathbb{R}^n;G)$ satisfying that
$P_m\ora{\fn} S$, $Q_m\ora{\fn}S$, $\mass(P_m)\to \mass(S)$,
$\mass(Q_m)\to \mass(S)$, $\mu_{P_m}\wc \mu$ and $\mu_{Q_m}\wc
\nu$, then $\mu=\nu$. 
\end{lemma}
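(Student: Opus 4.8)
The plan is to show that $\mu$ and $\nu$ agree on a sufficiently rich collection of sets, namely on $d$-dimensional open intervals whose boundaries are $\mu$- and $\nu$-null, and then conclude by a standard density argument that $\mu=\nu$ as Radon measures. The key observation is that both $\mu$ and $\nu$ should coincide with a canonical measure attached to $S$: the measure $\mu_S$ defined by $\mu_S(I)=\mass(S\mr I)$, or rather its appropriate extension. So the real content is to prove that if $P_m\ora{\fn} S$ with $\mass(P_m)\to\mass(S)$ and $\mu_{P_m}\wc\mu$, then $\mu$ is determined by $S$ alone, independently of the chosen approximating sequence.

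First I would fix an open $d$-interval $I$ (more precisely a rectangular box, or a half-space $H$) whose boundary is a null set for $\mu$ and for $\nu$; all but countably many parallel translates of a given box have this property, so such $I$ are dense enough. For such $I$, portmanteau gives $\mu(I)=\lim_m \mu_{P_m}(I)=\lim_m\mass(P_m\mr I)$ and similarly $\nu(I)=\lim_m\mass(Q_m\mr I)$. The strategy is then to estimate $\mass(P_m\mr I)$ and $\mass(Q_m\mr I)$ from above and below by quantities that only involve $S$. For the lower bound, lower semicontinuity of mass under flat convergence applied to $P_m\mr I\ora{\fn}$ (a flat chain supported in $\bar I$, which should equal $S\mr I$ in a suitable sense, using that slicing/restriction is continuous in flat norm for almost every $I$) yields $\liminf_m\mass(P_m\mr I)\ge \mass(S\mr I)$. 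The same holds for the complementary region $I^c$: $\liminf_m\mass(P_m\mr I^c)\ge\mass(S\mr I^c)$. Adding these two inequalities and using $\mass(P_m)=\mass(P_m\mr I)+\mass(P_m\mr I^c)$ together with the hypothesis $\mass(P_m)\to\mass(S)=\mass(S\mr I)+\mass(S\mr I^c)$ forces both liminfs to be actual limits and both inequalities to be equalities: $\lim_m\mass(P_m\mr I)=\mass(S\mr I)$. Hence $\mu(I)=\mass(S\mr I)=\nu(I)$ for every such good interval $I$.

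Finally, since the collection of open $d$-intervals whose boundary is $\mu$- and $\nu$-null is closed under finite intersection and generates the Borel $\sigma$-algebra (indeed any open set is a countable union of such boxes, and any compact set can be approximated from outside), and since $\mu$ and $\nu$ are locally finite Radon measures agreeing on this $\pi$-system with $\mu(\mathbb{R}^n)=\nu(\mathbb{R}^n)=\mass(S)<\infty$, the uniqueness part of Carathéodory's extension theorem gives $\mu=\nu$.

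\medskip

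The main obstacle I anticipate is making rigorous the claim that the flat limit of $P_m\mr I$ is a well-defined flat chain equal to a restriction $S\mr I$ of $S$, and that $\mass(S\mr I)+\mass(S\mr I^c)=\mass(S)$. Restriction by a box is not continuous in flat norm for every box, only for a.e. translate, so one must argue along a carefully chosen sequence of good boxes and use the additivity of mass for flat chains of finite mass (which holds since $\mass$ is the total variation of the associated Radon measure $\|S\|$). Concretely, I expect to invoke that a finite-mass flat chain $S$ carries a mass measure $\|S\|$ with $\mass(S\mr U)=\|S\|(U)$ for open $U$ with $\|S\|(\partial U)=0$, and that $\mu_{P_m}\wc\mu$ combined with $\mass(P_m)\to\mass(S)$ forces $\mu$ to be exactly $\|S\|$ — this last step is really the heart of the lemma, and it is where the ``no mass lost in the limit'' hypothesis is used decisively, via the splitting-and-lsc argument above applied simultaneously to a region and its complement.
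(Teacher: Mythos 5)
Your overall strategy coincides with the paper's: for a good open interval $I$, portmanteau gives $\mu(I)=\lim_m\mass(P_m\mr I)$, and then lower semicontinuity of mass applied simultaneously to the flat limit of $P_m\mr I$ and of $P_m-P_m\mr I$, combined with $\mass(P_m)\to\mass(S)$ and additivity, squeezes the liminfs into limits and identifies $\mu(I)$ with the mass of the limiting chain. Your concluding density step (intervals with $\mu$- and $\nu$-null boundary generate the Borel sets) also matches the paper, which exhausts an arbitrary open interval by an increasing sequence of good intervals via Fleming's Lemma 2.1.

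However, there is a genuine gap at exactly the point you flag as ``the main obstacle,'' and your proposed fix is circular. Your argument yields $\mu(I)=\mass(T_P)$ and $\nu(I)=\mass(T_Q)$, where $T_P$ and $T_Q$ are the flat-norm limits of $P_m\mr I$ and $Q_m\mr I$ respectively; to finish you must show $T_P=T_Q$ (or at least that their masses agree). You propose to identify both with ``$S\mr I$'' by invoking the mass measure $\lVert S\rVert$ of the finite-mass flat chain $S$ --- but in this paper both the restriction $S\mr X$ and the measure $\mu_S$ are defined only \emph{after}, and by means of, this very lemma; the independence of the limit from the approximating sequence is precisely what is being proved. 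The paper closes the gap non-circularly: it works only with the collection $\mathscr{A}$ of open intervals $I$ for which, besides $\mu(\partial I)=\nu(\partial I)=0$, the three series $\sum\fn(P_m\mr I-P_{m+1}\mr I)$, $\sum\fn(Q_m\mr I-Q_{m+1}\mr I)$ and, crucially, $\sum\fn(P_m\mr I-Q_m\mr I)$ all converge. The last condition forces $T_P=T_Q$ outright, and Fleming's Lemma 2.1 shows $\mathscr{A}$ is rich enough that every open interval is an increasing union of members of $\mathscr{A}$. Without some such interleaving condition tying the two sequences together on a sufficiently large family of intervals, your two liminf computations produce two a priori unrelated flat chains, and the proof does not close.
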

\begin{proof}
Let $\mathscr{A}$ be the collection of all open intervals $I\subseteq
\mathbb{R}^n$ such that $\mu(\partial I)=\nu(\partial I)=0$ and  
\[
\sum \Big(\fn(P_m\mr I-P_{m+1}\mr I)
+\fn(Q_m\mr I-Q_{m+1}\mr I)+\fn(P_m\mr I-Q_m
\mr I)\Big)<\infty,
\]
where $\partial I$ is the boundary of set $I$. We claim that $\mu(I)=\nu(I)$ for every
open interval $I\in\mathscr{A}$. Since
\[
\mu(I)\leq\liminf\limits_{m\to\infty}\mu_{P_m}(I)\leq
\limsup\limits_{m\to\infty}\mu_{P_m}(\bar{I})\leq\mu(\bar{I}),
\]
and $\mu(\partial I)=0$, we find that
\[
	\mu(I)=\lim\limits_{m\to\infty}\mu_{P_m}(I) \text{ and
	}\mu(\mathbb{R}^n\setminus I)=
	\lim\limits_{m\to\infty}\mu_{P_m}(\mathbb{R}^n\setminus I).
\]
For each $I\in\mathscr{A}$, we see that $P_m\mr I$ and 
$Q_m\mr I$ tend to
a same flat chain, saying that $T\in\FC_d(\mathbb{R}^n;G)$, then
\[
S-T=\lim\limits_{m\to\infty}
(P_m-P_m\mr I)=\lim\limits_{m\to\infty}(Q_m-Q_m\mr I).
\]
Since mass is lower semicontinuous,
\[\begin{aligned}
\mass(T)+\mass(S-T)&\leq\liminf\limits_{m\to\infty}
\mass(P_m\mr I)+\liminf\limits_{m\to\infty}
\mass(P_m\mr (\mathbb{R}^n\setminus I))\\&=\lim\limits_{m\to\infty}\mass(P_m)=\mass(S)
\leq\mass(T)+\mass(S-T).
\end{aligned}\]
Thus 
\[
\mass(T)=\lim\limits_{m\to\infty}
\mass(P_m\mr I)=\lim\limits_{m\to\infty}\mu_{P_m}(I)=\mu(I).
\]
Similarly,
\[
\mass(T)= \lim\limits_{m\to\infty}\mu_{Q_m}(I)=\nu(I).
\]
For each open interval $I$, by Lemma 2.1 in \cite{Fleming:1966} there exists a sequence of open intervals $\{I_j\}$ in $\mathscr{A}$ such that 
$I_j\subseteq I_{j+1}$ and $I=\cup I_j$, Hence
\[
\mu(I)=\lim\limits_{j\to\infty}\mu(I_j)=\lim\limits_{j\to\infty}\nu(I_j)=\nu
(I).
\]
\end{proof}
\begin{definition}
Let $S\in\mathscr{M}_d(\mathbb{R}^n;G)$ be a flat $d$-chain
with finite mass. Take a sequence $\left\{P_m\right\}$ of polyhedral $d$-chains  
such that $P_m\ora\fn S$, $\mass(P_m)\to\mass(S)$ and
$\mu_{P_m}$ tends weakly to a limit, then we define $\mu_S$ to be
the limit Radon measure.
\end{definition} 
By the above lemma, the Radon measure $\mu_S$ is well defined,
since the limit measure does not depend on the sequence $\{P_m\}$.

Let $S\in\mathscr{M}_d(\mathbb{R}^n;G)$ be any flat $d$-chain with
finite mass, and let $\left\{P_i\right\}$ be a sequence of 
polyhedral $d$-chains with $P_i\ora\fn S$ and $\mass(P_i)\to\mass(S)$. We denote by
$\mathscr{B}_{\{P_i\}}$
the collection of all intervals $I$ such that $\mu_S(\partial I)=0$ and $\sum_i
\fn(P_i\mr I-P_{i+1}\mr I)< \infty$. It is clear that $\{P_i\mr I\}$ converges to
a flat chain in flat norm when $I\in \mathscr{B}_{\{P_i\}}$.

\begin{lemma}
For any $S\in\mathscr{M}_d(\mathbb{R}^n;G)$ and Borel set $X$, if $\{P_i\}$ and $\{Q_i\}$ 
are two sequence of polyhedral chains satsfying that $P_i\ora{\fn} S$,
$Q_i\ora{\fn} S$, $\mass(P_i)\to \mass(S)$, 
$\mass(Q_i)\to \mass(S)$ and $\sum\Big(
\fn(P_i-P_{i+1})+\fn(Q_i-Q_{i+1})+\fn(P_i-Q_i)\Big)<\infty$, 
Then for any sequence 
$\{X_j\},\{Y_j\}$ of finite disjoint union of open intervals in
$\mathscr{B}_{\{P_i\}}$ and $\mathscr{B}_{\{Q_i\}}$ satisfying that
$\lim\limits_{j\to\infty}\mu_S(X_j\sd X)=0$ and 
$\lim\limits_{j\to\infty}\mu_S(Y_j\sd X)=0$ respectively, 
both sequence of flat chains
$\big\{\lim\limits_{i\to \infty}(P_i\mr X_j)\big\}$ and
$\big\{\lim\limits_{i\to \infty}(Q_i\mr Y_j)\big\}$ converge 
to a same flat chain in flat norm.
\end{lemma}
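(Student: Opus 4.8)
The plan is to reduce the statement to a single master estimate together with a common-refinement argument. As a preliminary normalization, observe that $\mu_{P_i}\wc\mu_S$ and $\mu_{Q_i}\wc\mu_S$: the total masses $\mu_{P_i}(\mathbb R^n)=\mass(P_i)$ are bounded, so every subsequence of $(\mu_{P_i})$ has a further subsequence converging weakly, and by the preceding lemma (applied to that subsequence together with a sequence defining $\mu_S$) its limit must be $\mu_S$; hence $\mu_{P_i}\wc\mu_S$, and likewise for $(\mu_{Q_i})$.

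The master estimate is the following: if $A$ and $A'$ are finite disjoint unions of open intervals all lying in $\mathscr B_{\{P_i\}}$, then $\{P_i\mr A\}$ and $\{P_i\mr A'\}$ converge in flat norm (being finite sums of $\fn$-convergent sequences), and, writing $T^P_A:=\lim_i P_i\mr A$,
\[
\fn\big(T^P_A-T^P_{A'}\big)=\lim_{i\to\infty}\fn\big(P_i\mr A-P_i\mr A'\big)\le\liminf_{i\to\infty}\mu_{P_i}(A\sd A')=\mu_S(A\sd A').
\]
The inequality uses $P_i\mr A-P_i\mr A'=P_i\mr(A\setminus A')-P_i\mr(A'\setminus A)$ together with $\mass(P_i\mr B)\le\mu_{P_i}(B)$ for finite unions of boxes $B$; the last equality holds because $A\sd A'$ is a bounded finite union of boxes with $\mu_S\big(\partial(A\sd A')\big)\le\mu_S(\partial A)+\mu_S(\partial A')=0$, so that $\mu_{P_i}(A\sd A')\to\mu_S(A\sd A')$ by weak convergence. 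The same holds with $Q$ in place of $P$.

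Feeding the given sequence $\{X_j\}$ into the master estimate gives $\fn(T^P_{X_j}-T^P_{X_k})\le\mu_S(X_j\sd X_k)\le\mu_S(X_j\sd X)+\mu_S(X\sd X_k)\to0$, so $\{T^P_{X_j}\}$ is Cauchy and converges to some $T^P\in\FC_d(\mathbb R^n;G)$; interleaving two such approximating sequences and applying the same estimate shows that $T^P$ depends only on $X$, not on the particular choice of finite disjoint unions of $\mathscr B_{\{P_i\}}$-intervals approximating $X$ in $\mu_S$-measure. Likewise $\{T^Q_{Y_j}\}$ converges to some $T^Q$, independent of the choice within $\mathscr B_{\{Q_i\}}$. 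It then remains to prove $T^P=T^Q$.

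To this end I would manufacture a single approximating sequence admissible for both families. Let $\mathscr C$ be the set of open intervals $I$ with $\mu_S(\partial I)=0$ and $\sum_i\big(\fn(P_i\mr I-P_{i+1}\mr I)+\fn(Q_i\mr I-Q_{i+1}\mr I)+\fn(P_i\mr I-Q_i\mr I)\big)<\infty$; then $\mathscr C\subseteq\mathscr B_{\{P_i\}}\cap\mathscr B_{\{Q_i\}}$, and, arguing as in the preceding lemma via Lemma 2.1 in \cite{Fleming:1966}, every open interval is an increasing union of intervals of $\mathscr C$. Writing $X_j=\bigsqcup_l I^{(j)}_l$ with each $I^{(j)}_l\in\mathscr B_{\{P_i\}}$, replace each $I^{(j)}_l$ by an interval $\tilde I^{(j)}_l\in\mathscr C$ with $\tilde I^{(j)}_l\subseteq I^{(j)}_l$ and $\mu_S\big(I^{(j)}_l\setminus\tilde I^{(j)}_l\big)$ as small as wished, and set $Z_j:=\bigsqcup_l\tilde I^{(j)}_l$; then $Z_j$ is a finite disjoint union of $\mathscr C$-intervals with $\mu_S(Z_j\sd X)\le\mu_S(X_j\setminus Z_j)+\mu_S(X_j\sd X)\to0$. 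Now assemble: since $\mathscr C\subseteq\mathscr B_{\{P_i\}}$, the independence statement yields $\lim_j\big(\lim_i P_i\mr Z_j\big)=T^P$, and since $\mathscr C\subseteq\mathscr B_{\{Q_i\}}$ it yields $\lim_j\big(\lim_i Q_i\mr Z_j\big)=T^Q$; on the other hand, for each fixed $j$ we have $\fn(P_i\mr Z_j-Q_i\mr Z_j)\le\sum_l\fn\big(P_i\mr\tilde I^{(j)}_l-Q_i\mr\tilde I^{(j)}_l\big)\to0$ as $i\to\infty$ because each $\tilde I^{(j)}_l\in\mathscr C$, so $\lim_i P_i\mr Z_j=\lim_i Q_i\mr Z_j$; letting $j\to\infty$ gives $T^P=T^Q$, as desired. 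I expect the main obstacle to be the master estimate — specifically, making rigorous the restriction of polyhedral chains to finite unions of boxes, the bound $\mass(P_i\mr B)\le\mu_{P_i}(B)$, and transferring the weak convergence $\mu_{P_i}\wc\mu_S$ across the boundary-null set $A\sd A'$; once that is in hand, the remaining reduction is a formal argument relying only on Fleming's Lemma 2.1.
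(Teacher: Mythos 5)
Your proposal is correct and follows essentially the same route as the paper: a Cauchy/stability estimate of the form $\fn(T_A-T_{A'})\le\mu_S(A\sd A')$ for finite disjoint unions of admissible intervals, independence of the limit from the approximating sequence, and then a common approximating sequence of intervals lying in $\mathscr{B}_{\{P_i\}}\cap\mathscr{B}_{\{Q_i\}}$ with the extra summability $\sum_i\fn((P_i-Q_i)\mr I)<\infty$ obtained from Fleming's Lemma 2.1, which forces the two limits to coincide. The only cosmetic differences are that you bound the flat norm via continuity of $\fn$ where the paper bounds the mass via lower semicontinuity, and you build the common sequence by shrinking the given $X_j$ rather than invoking a fresh sequence $U_j$ directly.
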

\begin{proof}
Since $X_j\in \mathscr{B}_{\{P_i\}}$, we find that 
$\lim_{i\to \infty}(P_i\mr X_j)$ exits.
\begin{equation}\label{eq:001}
\begin{aligned}
\mass\left(\lim_{i\to \infty}(P_i\mr X_{k})-\lim_{i\to \infty}(P_i\mr
X_{l})\right)&\leq \liminf_{i\to \infty}\mass(P_i\mr X_k-P_i\mr X_l)\\
&\leq\liminf_{i\to \infty}\big(\mu_{P_i}(X_k\setminus
X_{l})+\mu_{P_i}(X_l\setminus X_{k})\big) \\
&=\mu_{S}(X_{k}\sd X_{l})\leq \mu_S(X_k\sd X)+\mu_S(X_l\sd X).
\end{aligned}
\end{equation}
thus $\{\lim_{i\to \infty}(P_i\mr X_{j})\}$ is a Cauchy sequence 
in mass, it therefore converges to a flat chain in flat norm, saying $T$. 
We will show that the limit flat chain $T$ does not depend on the
choice of sequence $\{X_j\}$. Indeed, if we choose another 
sequence $\{Z_j\}$ of finite disjoint union of
open intervals in $\mathscr{B}_{\{P_i\}}$ such that 
$\mu_S(Z_j\sd X)\to 0$, then similar to \eqref{eq:001},
\[
\mass\left(\lim_{i\to \infty}(P_i\mr X_k)-\lim_{i\to \infty}(P_i\mr
Z_k)\right)\leq \mu_S(X_k\sd Z_k)\leq \mu_S(X_k \sd X)+\mu_S(Z_k\sd X).
\]
	
By a similar argument, $\{\lim_{i\to \infty}(Q_i\mr Y_j)\}$ is also converges in mass to 
a flat chain $R$ which does not depend on the choice of $\{Y_j\}$. 
	
By Lemma 2.1 in \cite{Fleming:1966}, there exists a sequence 
$\{U_j\}$ such that $U_j=\sqcup_{1\leq k\leq
j_0} I_{j,k}$, $I_{j,k}\in \mathscr{B}_{\{P_i\}}\cap
\mathscr{B}_{\{Q_i\}}$, $\sum_{i\geq 1} \fn \left( (P_i-Q_i)\mr
I_{j,k}\right)<\infty$ and $\mu_S(U_j \sd X)\to 0$. Thus
\[
\sum_{i=1}^{\infty}\fn(P_i\mr U_{j}-Q_i\mr U_{j} )\leq
\sum_{i=1}^{\infty}\sum_{k=1}^{j_0}\fn((P_i-Q_i)\mr I_{j,k})
=\sum_{k=1}^{j_0}\sum_{i=1}^{\infty}\fn((P_i-Q_i)\mr I_{j,k})<\infty.
\]
Hence 
\[
\lim_{i\to \infty}(P_i\mr U_j)=\lim_{i\to \infty}(Q_i\mr U_j),
\]
and 
\[
\lim_{j\to \infty}\lim_{i\to \infty}(P_i\mr X_j)=
\lim_{j\to \infty}\lim_{i\to\infty}(P_i\mr U_j)=
\lim_{j\to \infty}\lim_{i\to \infty}(Q_i\mr U_j)=\lim_{j\to
\infty}\lim_{i\to \infty}(Q_i\mr Y_j).
\]
\end{proof}
\begin{definition}
Let $S\in\mathscr{M}_d(\mathbb{R}^n;G)$ be a flat $d$-chain with 
finite mass, and let $X$ be a Borel set. Take a sequence of polyhedral $d$-chains $\{P_i\}$ 
such that $P_i\ora{\fn}S$, $\mass(P_i)\to \mass(S)$ and
$\sum\fn(P_i-P_{i+1})<\infty$, then we define
\[
S\mr X=\lim_{j\to \infty}\lim_{i\to \infty}P_i\mr X_j.
\]
for any sequence of $\{X_j\}$ of finite disjoint
union of open intervals in $\mathscr{B}_{\{P_i\}}$ satisfying that
$\mu_S(X_j\sd X)\to 0$, 
\end{definition}

Let $\grass{n}{d}$ be the Grassmann manifold which 
consists of $d$ dimensional
subspace of $\mathbb{R}^n$ and equipped with the metric 
\[
\dist(T_1,T_2)=\left\|(T_{1})_{\natural}-(T_2)_{\natural}\right\|,
\] 
where $T_{\natural}$ is the orthogonal projection 
onto the plane
$T$ and $\left\|.\right\|$ is the operator norm. 
A Radon measure
$V$ on the product space 
$\mathbb{R}^n\times \grass{n}{d}$ is called a
$d$-dimensional varifold in $\mathbb{R}^n$, and we write 
$\Var_d(\mathbb{R}^n)$ the
collection of all such varifolds in $\mathbb{R}^n$. 
We say a varifold $V\in
\Var_d(\mathbb{R}^n)$ is rectifiable if there exist a 
$d$-rectifiable set
$M$ and a nonnegative function 
$\theta:M\to\mathbb{R^{+}}$ for $\HM^d$-a.e.
$x\in M$ such that 
\[
V=(\theta\HM^d\mr M)\otimes\delta_{\Tan(M,x)},
\] 
where $\Tan(M,x)$ is the approximate tangent $d$-plane of $M$ at the 
point $x$ which exists $\HM^d$-a.e. $x\in M$. If the function
$\theta$ is integer-valued for $\HM^d$-a.e. $x\in M$, 
we say $V$ is a
$d$-integral varifold in $\mathbb{R}^n$. We denote by 
$\RVar_d(\mathbb{R}^n)$ and
$\IVar_d(\mathbb{R}^n)$ the collections of all 
$d$-dimensional rectifiable 
varifolds in $\mathbb{R}^n$ and all $d$-dimensional 
integral varifolds in
$\mathbb{R}^n$, respectively.
For any rectifiable chain 
$S\in\RC_d(\mathbb{R}^n;G)$,  
there exists a $d$-rectifiable set $M$ such that 
\[
\mu_S\left(\mathbb{R}^n\setminus M\right)=0.
\]
\begin{definition} \label{def:vf}
Let $S\in \RC_d(\mathbb{R}^n;G)$ be a rectifiable chain, 
and let $M$ be a $d$-rectifiable set such
that $\mu_S(\mathbb{R}^n\setminus M)=0$. 
Then the associated rectifiable varifold $\var(S)$ is defined by 
\[
\var(S)=\mu_S\otimes\delta_{\Tan{(M,x)}}.
\]
\end{definition}
For any $V\in\Var_d(\mathbb{R}^n)$, the weight 
$\left\|V\right\|$ is defined by the requirement that 
\[
\left\|V\right\|(A)=V(A\times G(n,d))
\] 
for any Borel set $A\subseteq\mathbb{R}^n$.

Let $f:\mathbb{R}^n\to\mathbb{R}^n$ be a $C^1$ 
mapping. Then there is a induced mapping between 
varifolds $f_{\sharp}:V_d(\mathbb{R}^n)
\to V_d(\mathbb{R}^n)$ given by
\[
f_{\sharp}V(\varphi)=\displaystyle
\int_{\mathbb{R}^n\times\grass{n}{d}}
\varphi(f(x),Df(x)T)\left\|
\wedge_dDf(x)\circ T_{\natural}\right\|dV(x,T)
\] 
for any $V\in \Var_d(\mathbb{R}^n)$ and 
$\varphi\in C_c(\mathbb{R}^n\times
\grass{n}{d},\mathbb{R})$, see \cite{Allard:1972}. 
For any $a\in \mathbb{R}^n$ and 
$V\in \Var_d(\mathbb{R}^n)$, a varifold 
$C\in\Var_d(\mathbb{R}^n)$ is called a varifold 
tangent of $V$ at $a$, if there is
a sequence of positive numbers $\{r_i\}$ with 
$\lim\limits_{i\to\infty} r_i=0$ such that 
\[
C=\lim\limits_{i\to\infty}(T_{a,r_i})_{\sharp}V,
\]
where $T_{a,r_i}$ is the mapping defined by 
$T_{a,r_i}(x)=r_i^{-1}(x-a)$. 
We denote by $\VarTan(V,a)$ the
collection of all varifold tangents of $V$ at the 
point $a$.
\section{The weak convergence of varifolds}
Let $V, V_m\in\Var_d(\mathbb{R}^n)$ be varifolds in 
$\mathbb{R}^n$. We say that $V_m$ weak converges to 
$V$, denote by $V_m\wc V$, if
\[
V_m(\varphi)\to V(\varphi), \forall \varphi
\in C_c(\mathbb{R}^n\times\grass{n}{d},\mathbb{R}).
\]
\begin{lemma}\label{1.1}
If $S_m$, $S\in\RC_d(\mathbb{R}^n;G)$, $S_m\ora\fn S$ 
and $\mass(S_m)\to\mass(S)$, then $\mu_{S_m}\wc\mu_S$.
\end{lemma}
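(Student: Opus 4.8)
The plan is to reduce the assertion, via a standard ``pass to a subsequence twice'' argument, to the fact already established above: by the definition of $\mu_S$ and the accompanying lemma, $\mu_S$ is \emph{the} weak limit of $\mu_P$ along \emph{every} sequence of polyhedral $d$-chains $P\ora\fn S$ with $\mass(P)\to\mass(S)$. Set $C=1+\sup_m\mass(S_m)<\infty$ (finite because $\mass(S_m)\to\mass(S)$), let $\Sigma=\{\mu\in\mathcal{M}(\mathbb{R}^n):\|\mu\|\le C\}$, and let $\mathsf{d}$ be a metric inducing the weak topology on $\Sigma$ (available since the weak topology is metrizable on balls of $\mathcal{M}(\mathbb{R}^n)$). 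Each $\mu_{S_m}$ lies in $\Sigma$, as $\|\mu_{S_m}\|=\mass(S_m)$. Since a sequence converges to $\mu_S$ in the metric space $\Sigma$ once every subsequence has a further subsequence converging to $\mu_S$, and since $\Sigma$ is weakly sequentially compact, it suffices to prove: if $\mu_{S_{m_k}}\wc\nu$ along some subsequence, then $\nu=\mu_S$.

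Fix such a subsequence. For each $m$, the definition of $\mu_{S_m}$ provides a sequence $\{P^{j}_m\}_{j\ge1}$ of polyhedral $d$-chains with $P^{j}_m\ora\fn S_m$, $\mass(P^{j}_m)\to\mass(S_m)$, and $\mu_{P^{j}_m}\wc\mu_{S_m}$ as $j\to\infty$; in particular, for $j$ large, $\mass(P^{j}_m)\le C$ (so $\mu_{P^{j}_m}\in\Sigma$) and $\mathsf{d}(\mu_{P^{j}_m},\mu_{S_m})\to0$. Choose $j=j(m)$ large enough that $Q_m:=P^{j(m)}_m$ satisfies
\[
\fn(Q_m-S_m)<2^{-m},\qquad \bigl|\mass(Q_m)-\mass(S_m)\bigr|<2^{-m},\qquad \mathsf{d}(\mu_{Q_m},\mu_{S_m})<2^{-m}.
\]
Then $\{Q_m\}$ is a sequence of polyhedral $d$-chains with $Q_m\ora\fn S$, since $\fn(Q_m-S)\le\fn(Q_m-S_m)+\fn(S_m-S)\to0$, and with $\mass(Q_m)\to\mass(S)$, since $\bigl|\mass(Q_m)-\mass(S)\bigr|\le\bigl|\mass(Q_m)-\mass(S_m)\bigr|+\bigl|\mass(S_m)-\mass(S)\bigr|\to0$; moreover $\mass(Q_m)\le\mass(S_m)+2^{-m}\le C$, so $\mu_{Q_m}\in\Sigma$.

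Now work along the indices $m_k$. From $\mu_{S_{m_k}}\wc\nu$ we get $\|\nu\|\le\liminf_k\mass(S_{m_k})=\mass(S)\le C$, so $\nu\in\Sigma$; combined with $\mathsf{d}(\mu_{Q_{m_k}},\mu_{S_{m_k}})\le2^{-m_k}\to0$, this yields $\mathsf{d}(\mu_{Q_{m_k}},\nu)\to0$, i.e.\ $\mu_{Q_{m_k}}\wc\nu$. Thus $\{Q_{m_k}\}_k$ is a sequence of polyhedral $d$-chains converging to $S$ in flat norm, with $\mass(Q_{m_k})\to\mass(S)$ and $\mu_{Q_{m_k}}\wc\nu$; by the definition of $\mu_S$ and the lemma asserting that this limit is independent of the approximating sequence, $\nu=\mu_S$. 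Hence $\mu_{S_m}\wc\mu_S$, as desired.

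The only step involving real work is the diagonal choice of the polyhedral chains $Q_m$: one needs each $\mu_{S_m}$ to be approachable by polyhedral chains converging to $S_m$ \emph{simultaneously} in flat norm and in mass, and this is precisely where the hypothesis $\mass(S_m)\to\mass(S)$ — rather than just $S_m\ora\fn S$ — is used. The rest is the routine metrizable-weak-topology reduction together with the uniqueness of $\mu_S$ proved earlier.
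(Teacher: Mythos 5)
Your proof is correct and follows essentially the same route as the paper's: metrize the weak topology on a mass-bounded set of Radon measures, make a diagonal choice of polyhedral chains $Q_m$ within $2^{-m}$ of $S_m$ simultaneously in flat norm, in mass, and in the metric $\mathsf{d}$, and then invoke the uniqueness of $\mu_S$ as the weak limit along any admissible approximating sequence. Your subsequence-of-subsequence wrapper is a slightly more careful way of justifying the final step (the paper asserts directly that the diagonal sequence's measures converge weakly to $\mu_S$), but the underlying argument is identical.
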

\begin{proof}
By taking subsequence it suffices to prove this for
sequence $\left\{S_m\right\}$ such that 
\[
\sum_{m}\fn(S_m-S_{m+1})<\infty.
\]
By the definition of
$\mu_{S_m}$, we can find a sequence of polyhedral chains 
$\left\{P_{m,\ell}\right\}$ such 
that $P_{m,\ell}\ora\fn S_m$ and $\mass(P_{m,\ell})\to\mass(S_m)$ as $\ell\to \infty$.
By taking subsequence we may assume that 
\[
\sum_{\ell}\fn(P_{m,\ell}-P_{m,\ell+1})<\infty,
\]
then $\mu_{P_{m,\ell}}\wc\mu_{S_m}$. Let metric $\rho$ be a 
metrization of the weak topology on a large ball in
$\mathcal{M}(\mathbb{R}^n)$,
which contains $\mu_S$ and the sequence $\{\mu_{S_m}\}$. 
Thus $\rho(\mu_{P_{m,\ell}},\mu_{S_m})\to0$. Since $S_m\ora\fn S$ and
$\mass(S_m)\to\mass(S)$, then we can choose a natural number $\ell_m$ 
such that
\[
\fn(P_{m,\ell_m}-S_{m})<2^{-m}, 
\left|\mass(P_{m, \ell_m})-\mass(S_m)\right|<2^{-m}\text{ and }
\rho(\mu_{P_{m,\ell_m}},\mu_{S_m})<2^{-m}.
\] 
Thus 
$P_{m,\ell_m}\ora\fn S$, $\mass(P_{m,\ell_m})\to\mass(S)$ and
$\sum_{m}\fn(P_{m,\ell_m}-P_{m,\ell_{m+1}})<\infty$. Hence 
$\mu_{P_{m,\ell_m}}\wc\mu_S$, therefore $\mu_{S_m}\wc\mu_S$.
\end{proof}

\begin{lemma}\label{1.2}
Let $S\in\RC_d(\mathbb{R}^n;G)$ be a rectifiable 
chain, and let $M$ be a $d$-rectifiable set such that 
$\mu_S(\mathbb{R}^n\setminus M)=0$. Then for 
$\mu_S$-almost every $x\in M$,
\[
\lim\limits_{r\to0}
\frac{\mass(\pi_{x\sharp}(S\mr\cball(x,r)))}
{\mass(S\mr\cball(x,r))}=1,
\]
where $\pi_{x}$ is the orthogonal projection of 
$\mathbb{R}^n$ onto approxiamate tangent $d$-plane $\Tan(M,x)$.
\end{lemma}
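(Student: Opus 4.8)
\emph{Strategy.} Since $\pi_x$ is an orthogonal projection, $\Lip(\pi_x)=1$, so the mass–decreasing property of pushforwards, $\mass(f_{\sharp}T)\le\Lip(f)^{d}\mass(T)$, gives $\mass(\pi_{x\sharp}(S\mr\cball(x,r)))\le\mass(S\mr\cball(x,r))$ for every $r>0$. Hence the quotient in the statement is always $\le1$, and the whole task is to show that its $\liminf$ as $r\to0$ is $\ge1$ for $\mu_S$-almost every $x$. My plan is to use rectifiability to reduce, near a typical point, to a $C^{1}$ graph over the tangent plane, on which $\pi_x$ is an almost isometric \emph{bijection} and therefore cannot lose mass through cancellation.

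First I would use the standard decomposition of $d$-rectifiable sets (\cite{Federer:1969}) to write $M=M_{0}\cup\bigcup_{k\ge1}N_{k}$ with $\HM^{d}(M_{0})=0$ and each $N_{k}$ a compact subset of a $C^{1}$ $d$-submanifold that is a graph over a $d$-plane $P_{k}$. As $\mu_S$ is a Radon measure concentrated on $M$, it suffices to prove the conclusion for $\mu_S$-a.e.\ $x$ in a fixed $N_{k}$; there I would restrict to the $\mu_S$-full set of $x\in N_{k}$ at which: (i) $\Tan(M,x)=\Tan(N_{k},x)=:P$ exists and equals the tangent plane of the ambient $C^{1}$ manifold, so that near $x$ the set $N_{k}$ is the graph $\{\,p+u(p):p\in V\,\}$ of a $C^{1}$ map $u\colon V\subseteq P\to P^{\perp}$ with $Du$ continuous and $Du(\pi_x(x))=0$; (ii) $\mu_S(\cball(x,r)\setminus N_{k})=o(\mu_S(\cball(x,r)))$ as $r\to0$, which holds $\mu_S$-a.e.\ on $N_{k}$ because the Radon measures $\mu_S\mr N_{k}$ and $\mu_S\mr(M\setminus N_{k})$ are mutually singular; and (iii) $\mu_S(\cball(x,r))>0$ for all small $r$. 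I would also use the identity $\mass(S\mr X)=\mu_S(X)$ for Borel $X$, which comes from the restriction construction recalled before the lemma.

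Then I would fix such an $x$ and take $r$ so small that $N_{k}\cap\cball(x,2r)$ lies inside the graph part. Setting $G_{r}=S\mr(\cball(x,r)\cap N_{k})$ and $R_{r}=S\mr(\cball(x,r)\setminus N_{k})$, one has $S\mr\cball(x,r)=G_{r}+R_{r}$, $\mass(G_{r})=\mu_S(\cball(x,r)\cap N_{k})$ and $\mass(R_{r})=\mu_S(\cball(x,r)\setminus N_{k})$. The decisive point will be that
\[
\mass(\pi_{x\sharp}G_{r})\ge(1+c(r))^{-d}\,\mass(G_{r})\quad\text{with }c(r)\to0\text{ as }r\to0 .
\]
To prove it, observe that $\pi_x$ is injective on $N_{k}\cap\cball(x,2r)$, with inverse $p\mapsto p+u(p)$; extending the restriction of $u$ to a small ball around $\pi_x(x)$ to a Lipschitz map on $P$ (by Kirszbraun) whose Lipschitz constant tends to $0$ with $r$, and letting $\psi\colon\mathbb{R}^{n}\to\mathbb{R}^{n}$ be the corresponding graph map, one gets $\Lip(\psi)\le1+c(r)$ by (i) and $\psi\circ\pi_x=\id$ on $N_{k}\cap\cball(x,2r)\supseteq\spt G_{r}$. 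Functoriality of the pushforward then gives $\psi_{\sharp}\pi_{x\sharp}G_{r}=G_{r}$, hence $\mass(G_{r})\le\Lip(\psi)^{d}\mass(\pi_{x\sharp}G_{r})\le(1+c(r))^{d}\mass(\pi_{x\sharp}G_{r})$.

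Combining this with $\mass(\pi_{x\sharp}R_{r})\le\mass(R_{r})$ yields
\[
\frac{\mass(\pi_{x\sharp}(S\mr\cball(x,r)))}{\mass(S\mr\cball(x,r))}\ge(1+c(r))^{-d}\,\frac{\mu_S(\cball(x,r)\cap N_{k})}{\mu_S(\cball(x,r))}-\frac{\mu_S(\cball(x,r)\setminus N_{k})}{\mu_S(\cball(x,r))},
\]
and letting $r\to0$, using (ii), (iii) and $c(r)\to0$, the right side tends to $1$; with the trivial upper bound this proves the limit equals $1$ at $\mu_S$-a.e.\ $x\in N_{k}$, and the union over $k$ finishes the argument. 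The main obstacle will be the decisive estimate $\mass(\pi_{x\sharp}G_{r})\ge(1-o(1))\mass(G_{r})$: a Lipschitz projection can in general destroy mass through cancellation, and it is precisely the reduction to a $C^{1}$ graph piece — where $\pi_x$ is an almost isometric bijection and so cannot fold the chain onto itself — that rules this out. A lesser technicality will be isolating the negligible non-tangential remainder $R_{r}$, for which only the mutual singularity in (ii) is needed, not the existence of the density $\Theta^{d}(\mu_S,x)$.
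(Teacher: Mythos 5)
Your argument is correct in outline but follows a genuinely different route from the paper's. The paper argues globally and by contradiction: assuming the ratio stays below $1-\delta$ on a set $X_\delta$ of positive $\mu_S$-measure, it selects a Vitali family of balls with good density and conical-concentration properties, builds a Lipschitz ``squashing'' map $\varphi_\varepsilon$ (a composition of maps $\varphi_i$ flattening $S$ onto the tangent plane inside each ball), shows via the homotopy formula that $\varphi_{\varepsilon\sharp}S\to S$ in flat norm, and then exhibits a definite mass drop $\mass(\varphi_{\varepsilon\sharp}S)\leq\mass(S)-3\delta\mu_S(X_\delta)/4$, contradicting lower semicontinuity of mass; since the homotopy estimate involves $\mass(\partial S)$, the case $\mass(\partial S)=\infty$ must then be treated separately by approximating $S$ in mass by Lipschitz chains. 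Your proof is local and direct: after decomposing $M$ into $C^1$ graph pieces $N_k$, you note that at a typical point $\pi_x$ admits on $N_k\cap\cball(x,2r)$ a Lipschitz left inverse $\psi$ with $\Lip(\psi)\leq 1+o(1)$, so $\mass(G_r)=\mass(\psi_\sharp\pi_{x\sharp}G_r)\leq(1+o(1))^d\mass(\pi_{x\sharp}G_r)$ --- the projection cannot cancel mass on a piece it maps injectively --- while the non-graph remainder is negligible by Besicovitch differentiation. What your route buys is the elimination of the contradiction scheme, the Vitali covering, the homotopy formula, and the entire $\mass(\partial S)=\infty$ case; what the paper's route buys is that it never leaves the flat-norm machinery already set up in its preliminaries and needs no structure theorem for rectifiable sets.

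Two steps of yours deserve explicit justification. First, the identity $\psi_\sharp\pi_{x\sharp}G_r=G_r$ uses both functoriality, $(\psi\circ\pi_x)_\sharp=\psi_\sharp\circ\pi_{x\sharp}$, and locality of the pushforward (that $f_\sharp T$ is determined by $f|_{\spt T}$ for a compactly supported rectifiable chain $T$, so that $\psi\circ\pi_x=\id$ on $\spt G_r\subseteq N_k\cap\cball(x,r)$ forces $(\psi\circ\pi_x)_\sharp G_r=G_r$); both facts are true for rectifiable $G$-chains but are not among the paper's stated preliminaries and should be cited, e.g.\ from \cite{Whitney:1957} for polyhedral chains and \cite{White:1999:Acta} for the rectifiable case. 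Second, the exceptional set $M_0$ of the $C^1$ decomposition is only $\HM^d$-null, so you need $\mu_S\ll\HM^d\mr M$ to discard it; this holds because $\mu_S$ is a $d$-rectifiable measure (as the paper itself uses at the start of its proof), but it should be said.
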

\begin{proof}
Since $M$ is a $d$-rectifiable set and $\mu_S$ is a 
$d$-rectifiable measure, then the approximate tangent $d$-plane $\Tan(M,x)$ 
exists for $\mu_S$-a.e.
$x\in M$, and $\Theta^d(\mu_S,x)$ exists and positive 
for $\mu_S$-a.e. $x\in M$. So we may assume that 
$\Tan(M,x)$ exists and $\Theta^d(\mu_S,x)>0$ for all 
$x\in M$.

If $\mass(\partial S)<\infty$,
by contradiction we assume that there is a positive 
$\mu_S$ set $X\subseteq M$ such that 
\[
\liminf_{r\to0}\frac{\mass(\pi_{x\sharp}
(S\mr\cball(x,r)))}{\mass(S\mr\cball(x,r))}<1.
\]
Then there exists $\delta>0$ such that 
\[
X_{\delta}=\left\{x\in X:\liminf_{r\to0}
\frac{\mass(\pi_{x\sharp}(S\mr\cball(x,r)))}
{\mass(S\mr\cball(x,r))}<1-\delta\right\}
\]
has $\mu_S$ measure positive.  For any
$0<\varepsilon<\min\{\delta/(4^{d+2}(d+1)),1/3\}$, 
\[
\lim_{r\to0}r^{-d}\mu_S(M\cap\cball(x,r)\setminus 
\mathcal{C}(x,T_x,\varepsilon))=0,
\]
where $T_x=\Tan(M,x)$ and
$\mathcal{C}(x,T_x,\varepsilon)=
\{y\in\mathbb{R}^n:\dist(y,T_x)\leq
\varepsilon|y-x|\}$.
Then the collection $\mathscr{C}$ of closed balls 
$\cball(x,r)$ such that
$x\in X_{\delta}$, $0<r\leq \varepsilon$, 
\[
\mass(\pi_{x\sharp}(S\mr\cball(x,r)))\leq(1-\delta)
\mass(S\mr\cball(x,r)),
\]
\[
\left|\frac{\mu_S(\cball(x,\rho))}{\omega_d\rho^d}
-\Theta^d(\mu_S,x)\right|\leq\varepsilon
\Theta^d(\mu_S,x),\forall 0<\rho\leq r
\]
and 
\[
\rho^{-d}\mu_S(M\cap\cball(x,\rho)\setminus 
\mathcal{C}(x,T_x,\varepsilon))
\leq\varepsilon\omega_d\Theta^d(\mu_S,x), 
\forall 0<\rho\leq r,
\]
is a Vitali covering of $X_{\delta}$. By the Vitali 
covering theorem, there is countable disjoint 
subcollection of such balls $\cball(x_i,r_i)$ of the 
radius $r_i\leq\varepsilon$ covering the set 
$X_\delta$. So there exists a natural number $N$ such 
that 
\[
\mu_{S}(X_{\delta}\setminus\bigcup_{i=1}^{N}
\cball(x_i,r_i))<\varepsilon.
\] 
We put 
\[
A_i=(\Tan(M,x_i)+\cball(0,\varepsilon
r_i))\cap\cball(x_i,(1-2\varepsilon)r_i).
\]
then 
\[
A_i+\cball(0,\varepsilon r_i)\subseteq\cball(x_i,r_i).
\] 
Let $\varphi_{i}:\mathbb{R}^n\to \mathbb{R}^n$ be the 
mapping given by 
\[
\varphi_i(x)=x-\eta(\dist(x,A_i)/r_i)
(T_i)_{\natural}^\perp(x-x_i), \forall
x\in\mathbb{R}^n,
\]
where $T_i=\Tan(M,x_i)$, 
$(T_i)_{\natural}^\perp:\mathbb{R}^n\to T_i^\perp$ 
is an orthogonal projection and 
\[
\eta(t)=\begin{cases}
1,& t\leq 0;\\
1-t/\varepsilon,&0\leq t\leq \varepsilon;\\
0,&t>\varepsilon.
\end{cases}
\]
Thus $\Lip(\eta)=1/\varepsilon$,
$\varphi_i(\cball(x_i,r_i))\subseteq\cball(x_i,r_i)$, 
$\varphi_i(x)=x$ for
$x\in \mathbb{R}^n\setminus \cball(x_i,r_i)$.
For each $x,y\in\mathbb{R}^n$, if
$\min\left\{\dist(x,A_i),\dist(y,A_i)\right\}
\ge\varepsilon r_i$,
\[
\left|\varphi_i(x)-\varphi_i(y)\right|=
\left|x-y\right|.
\] 
Otherwise 
$\min\left\{\dist(x,A_i),\dist(y,A_i)\right\}
<\varepsilon r_i$, we assume
that $\dist(y,A_i)<\varepsilon r_i$. Then
\[
\left|(T_i)_{\natural}^{\perp}(y-x_i)\right|
=\dist(y,T_i)\leq 2\varepsilon r_i.
\]
Thus
\[\begin{aligned}
\left|\varphi_{i}(x)-\varphi_{i}(y)\right|&=
\left|x-y+\eta(\dist(y,A_i)/r_i)
(T_i)_{\natural}^{\perp}(y-x_i)-\eta(\dist(x,A_i)/r_i)
(T_i)_{\natural}^{\perp}(x-x_i)\right|\\
&\leq\left|x-y\right|+\left|\eta(\dist(y,A_i)/r_i)
(T_i)_{\natural}^{\perp}
(y-x_i)-\eta(\dist(x,A_i)/r_i)
(T_i)_{\natural}^{\perp}(x-x_i)\right|\\
&\leq\left|x-y\right|+\left|(\eta(\dist(y,A_i)/r_i)-
\eta(\dist(x,A_i)/r_i))(T_i)_{\natural}^{\perp}
(y-x_i)\right|\\&+
\left|\eta(\dist(x,A_i)/r_i)
(T_i)_{\natural}^{\perp}(x-y)\right|\\
&\leq\left|x-y\right|+\frac{1}{r_i}
\Lip(\eta)\left|\dist(y,A_i)-
\dist(x,A_i)\right|\left|(T_i)_{\natural}^{\perp}
(y-x_i)\right|+	\left|x-y\right|\\
&\leq2\left|x-y\right|+\frac{1}
{\varepsilon r_i}2\varepsilon r_i\left|x-y\right|\\
&\leq4\left|x-y\right|.
\end{aligned}\] 
Therefore $\Lip(\varphi_i)\leq 4$. 
We put 
$\varphi_{\varepsilon}=\varphi_1\circ\varphi_2
\cdots\circ\varphi_N$
and 
\[
h_{\varepsilon}(u,x)=(1-u)x+u
\varphi_{\varepsilon}(x), 0\leq u\leq 1.
\]
We claim that $\Lip(\varphi_{\varepsilon})\leq4$. For any $x,y\in\mathbb{R}^n$, if $x,y\in\cball(x_i,r_i), i=1,\cdots, N$, since the closed balls $\cball(x_i,r_i)$ are disjoint,
$\varphi_{\varepsilon}(x)=\varphi_i(x), \varphi_{\varepsilon}(y)=\varphi_i(y)$, then 
$|\varphi_{\varepsilon}(x)-\varphi_{\varepsilon}(y)|\leq4|x-y|$; if
$x,y\in\mathbb{R}^n\setminus\bigcup_{i=1}^{N}\cball(x_i,r_i)$, $\varphi_{\varepsilon}(x)=x, \varphi_{\varepsilon}(y)=y$, then $|\varphi_{\varepsilon}(x)-\varphi_{\varepsilon}(y)|=|x-y|$; if
$x\in\cball(x_i,r_i),y\in\mathbb{R}^n\setminus\bigcup_{j=1}^N\cball(x_j,r_j)$, $\varphi_{\varepsilon}(x)=\varphi_i(x)$, $\varphi_{\varepsilon}(y)=y$, then
$|\varphi_{\varepsilon}(x)-\varphi_{\varepsilon}(y)|=|\varphi_i(x)-y|\leq4|x-y|$; if
$x\in\cball(x_i,r_i), y\in\cball(x_j,r_j), i\ne j$, since the closed balls are disjoint, we can find a point
$z=tx+(1-t)y\in\mathbb{R}^n\setminus\bigcup_{k=1}^N\oball(x_k,r_k)$, $0<t<1$ such that
\[
|\varphi_{\varepsilon}(x)-\varphi_{\varepsilon}(y)|\leq|\varphi_{\varepsilon}(x)-z|+|z-\varphi_{\varepsilon}(y)|\leq4(|x-z|+|z-y|)=4|x-y|.
\]
By (6.3) in \cite[p172]{Fleming:1966},
\[
\varphi_{\varepsilon\sharp}(S)-S=
\partial h_{\varepsilon\sharp}
(I\times S)+h_{\varepsilon\sharp}(I\times\partial S), I=[0,1].
\]
By (6.5) in \cite[p172]{Fleming:1966}, 
\begin{equation}\label{eq:1}
\mass(h_{\varepsilon\sharp}
(I\times S))\leq\left\|\varphi_{\varepsilon}-
\id\right\|_{\infty}\Lip(\varphi_{\varepsilon})^d
\mass(S),
\end{equation}
and
\begin{equation}\label{eq:2}
\mass(h_{\varepsilon\sharp}(I\times
\partial S))\leq\left\|\varphi_{\varepsilon}-
\id\right\|_{\infty}\Lip(\varphi_{\varepsilon})^{d-1}
\mass(\partial S).
\end{equation}
Since $\varphi_{\varepsilon}({\cball(x_i,r_i)})=\varphi_i({\cball(x_i,r_i)})\subseteq\cball(x_i,r_i),i=1,\cdots,N$, then if $x\in\bigcup_{i=1}^N\cball(x_i,r_i)$, 
$|\varphi_{\varepsilon}(x)-x|\leq2r_i\leq2\varepsilon$. Otherwise $|\varphi_{\varepsilon}(x)-x|=0$. Therefore $\|\varphi_{\varepsilon}-\id\|_{\infty}\leq2\varepsilon$.
From \eqref{eq:1} and \eqref{eq:2},
\[\begin{aligned}
\fn(\varphi_{\varepsilon\sharp}(S)-S)&
\leq\fn(h_{\varepsilon\sharp}
(I\times S))+\fn(h_{\varepsilon\sharp}
(I\times\partial S))\\
&\leq\mass(h_{\varepsilon\sharp}
(I\times S))+\mass(h_{\varepsilon\sharp}
(I\times\partial S))\\
&\leq\left\|\varphi_{\varepsilon}-\id\right\|_
{\infty}4^d(\mass(S)+\mass(\partial S))\\
&\leq2\varepsilon 4^d(\mass(S)+\mass(\partial S)).
\end{aligned}\]
Letting $\varepsilon\to0$,
$\varphi_{\varepsilon\sharp}(S)\ora\fn S$.  
By the lower semicontinuity of $\mass$, 
\[
\mass(S)\leq\lim\inf\limits\mass
(\varphi_{\varepsilon\sharp}S).
\] 
Set
\[
B_i=\cball(x_i,r_i),B=\bigcup\limits_{i=1}^{N}B_i,A=
\bigcup\limits_{i=1}^{N}A_i,
B_i'=\cball(x_i,(1-2\varepsilon)r_i),
C_i=\mathcal{C}(x_i,T_i,\varepsilon).
\]
But we get 
\[\begin{aligned}
\mass(\varphi_{\varepsilon\sharp}S)
&\leq\mass(\varphi_{\varepsilon\sharp}(S\mr A))
+\mass(\varphi_{\varepsilon\sharp}
(S\mr(B\setminus A)))+\mu_S(\mathbb{R}^n\setminus B)
\\&\leq\sum\limits_{i=1}^{N}\left(\mass((T_i)_
{\natural}(S\mr A_i))+
\Lip(\varphi_{\varepsilon})
\mass(S\mr(B_i\setminus A_i))\right)+
\mu_{S}(\mathbb{R}^n\setminus B)\\&
\leq\sum\limits_{i=1}^{N}((1-\delta)
\mass(S\mr B_i)+4^d(\mass(S\mr(B_i\setminus B_i'))+
\mass(S\mr(B_i'\setminus
C_i))))+\mu_S(\mathbb{R}^n\setminus B)\\&
\leq\mu_S(\mathbb{R}^n)-\sum\limits_{i=1}^{N}
(\delta\mass(S\mr B_i)-4^d(1+
\varepsilon-(1-2\varepsilon)^{d+1}
+\varepsilon(1-2\varepsilon)^{d})
\Theta^d(\mu_S,x_i)\omega_dr_i^d)\\&=
\mass(S)-\sum\limits_{i=1}^{N}(\delta-4^d
[1+\varepsilon-(1-2\varepsilon)^{d+1}+
\varepsilon(1-2\varepsilon)^{d}]/(1-\varepsilon))
\mass(S\mr B_i)\\&
\leq \mass(S)-3 \delta \mu_S(X_{\delta})/4.
\end{aligned}\] 
This contradicts lower semicontnuity 
of mass.
 
If $\mass(\partial S)=\infty$, for each $\varepsilon\in(0,1)$,
there exists a Lipschitz chain $R$ such that
$\mass(S-R)<\varepsilon^2$. 
Let $M'$ be a rectifiable set such
that $\mu_R(\mathbb{R}^n\setminus M')=0$. From the above proof,
we get that for $\mu_R$-almost every $x\in M'$, 
\[
\lim\limits_{r\to0}
\frac{\mass(\pi'_{x\sharp}(R\mr\cball(x,r)))}
{\mass(R\mr\cball(x,r))}=1,
\]
where $\pi_{x}'$ is an orthogonal projection of 
$\mathbb{R}^n$ onto approxiamate tangent $d$-plane $\Tan(M',x)$. Define
\[
Y_{\varepsilon}=\left\{x\in M:\liminf_{r\to0}
\frac{\mass(\pi_{x\sharp}(S\mr\cball(x,r)))}
{\mass(S\mr\cball(x,r))}<1-\varepsilon\right\}
\]
and
\[
X_{\varepsilon}=\left\{x\in M\cap M':\liminf_{r\to0}
\frac{\mass(\pi_{x\sharp}(S\mr\cball(x,r)))}
{\mass(S\mr\cball(x,r))}<1-\varepsilon\right\}.
\]
Then the collection $\mathscr{C}'$ of closed balls 
$\cball(x,r)$ such that $x\in X_{\varepsilon}$, 
$0<r<\varepsilon$,
\[
\mass(\pi_{x\sharp}(S\mr\cball(x,r)))\leq(1-\varepsilon)
\mass(S\mr\cball(x,r)),
\]
\[
\mass(\pi'_{x\sharp}(R\mr\cball(x,r)))>(1-\varepsilon/2)
\mass(R\mr\cball(x,r))
\]
is a Vitali convering of $X_{\varepsilon}$. By the Vitali
convering theorem, there is countable disjoint subcollection
of such balls $\cball(x_i,r_i)$ of the radius $r_i\leq\varepsilon$
convering the set $X_{\varepsilon}$. Set $B_i=\cball(x_i,r_i)$,
since $\pi_x=\pi_x'$ for $\mu_R$-almost every $x\in X_{\varepsilon}$, 
thus
\[\begin{aligned}
\mass(\pi'_{x\sharp}(R\mr B_i))&\leq\mass(\pi'_{x\sharp}((R-S)\mr B_i))
+\mass(\pi_{x\sharp}(S\mr B_i))\\&\leq\mass((R-S)\mr B_i)+
(1-\varepsilon)\mass(S\mr B_i)\\&\leq\mass((R-S)\mr B_i)+
(1-\varepsilon)[\mass(R\mr B_i)+\mass((S-R)\mr B_i)]\\&
\leq(1-\varepsilon)\mass(R\mr B_i)+2\mass((S-R)\mr B_i). 
\end{aligned}\]
Since $\mass(\pi'_{x\sharp}(R\mr B_i))>(1-\varepsilon/2)
\mass(R\mr B_i)$, then
\[
\mass(R\mr B_i)<\frac{4\mass\left((S-R)\mr B_i\right)}{\varepsilon}.
\]
Since 
\[\begin{aligned}
\mu_R\left(\bigcup\limits_iB_i\right)=\sum_i\mu_R(B_i)=
\sum_i\mass(R\mr B_i)\leq\frac{4\mass\left((S-R)
\mr\left(\bigcup_iB_i\right)\right)}{\varepsilon}\leq
\frac{4\mass(S-R)}{\varepsilon},
\end{aligned}\]
and
\[\begin{aligned}
\mass(S\mr(M\setminus M'))&\leq\mass((S-R)
\mr(M\setminus M'))+\mass(R\mr(M\setminus M'))\\&
\leq\mass(S-R)+\mass(R\mr(\mathbb{R}^n\setminus M'))
\leq\varepsilon^2.
\end{aligned}\]
Thus
\[\begin{aligned}
\mu_S(X_\varepsilon)&\leq\sum_i\mu_S(B_i)\leq
\sum_i(\mass(R\mr B_i)+\mass((S-R)\mr B_i))
\\&\leq\frac{4\mass(S-R)}{\varepsilon}+\mass(S-R)\leq4\varepsilon
+\varepsilon^2\leq5\varepsilon,
\end{aligned}\]
and
\[
\mu_S(Y_{\varepsilon}\setminus X_{\varepsilon})\leq
\mu_S(M\setminus M')=\mass(S\mr(M\setminus M'))\leq\varepsilon^2.
\]
Then 
\[
\mu_S(Y_{\varepsilon})=\mu_S(X_{\varepsilon})+
\mu_S(Y_{\varepsilon}\setminus X_{\varepsilon})\leq
5\varepsilon+\varepsilon^2\leq6\varepsilon.
\]
\end{proof}
\begin{lemma}\label{1.3}
Let $f:\mathbb{R}^n\to\mathbb{R}^n$ be a 
Lipschitz mapping. For any 
$P\in\PC_d(\mathbb{R}^n;G)$, by setting $E=\spt P$,
\[
\mass(f_{\sharp}P)\leq\displaystyle\int_E
\ap J_d(f|_{E})(x)d\mu_P(x).
\]
\end{lemma}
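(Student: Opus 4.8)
\emph{Proof strategy.} The plan is to push $P$ forward by the simplexwise affine approximations $\bar f$ that occur in the very definition of $f_{\sharp}P$, to bound the mass of each resulting polyhedral push-forward by the integral of its approximate Jacobian through the area formula, and then to let the mesh tend to $0$, invoking Lemma~\ref{le:affapp} to identify the limit of those integrals and the lower semicontinuity of mass to control $\mass(f_{\sharp}P)$.

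First I would fix a representation $P=\sum_{i=1}^{N}g_i\Delta_i$ in which the $d$-simplices $\Delta_i$ have pairwise disjoint interiors, together form a $d$-simplicial complex $\mathcal{K}$ with $E=\spt P=\bigcup_i\Delta_i$, and realize the mass, so that $\mu_P=\sum_{i=1}^{N}|g_i|\,\HM^d\mr\Delta_i$; such a reduced form exists by the standard reduction for polyhedral chains, refining the simplices if necessary. Let $\{\mathfrak{S}_m\mathcal{K}\}$ be the sequence of standard subdivisions, so that $\mesh(\mathfrak{S}_m\mathcal{K})\to0$ while $\kappa(\tau)\ge\eta$ for every $\tau\in\mathfrak{S}_m\mathcal{K}$ and some $\eta>0$, and let $f_m=\bar f$ be the corresponding simplexwise affine approximation of $f|_E$. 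By the definition of $f_{\sharp}P$ recalled above, $f_{m\sharp}P\ora{\fn}f_{\sharp}P$.

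For fixed $m$, each sub-simplex $\sigma$ of $\mathfrak{S}_m\mathcal{K}$ lies in a unique $\Delta_i$, on which $f_m$ is affine, so $f_m(\sigma)$ is a (possibly degenerate) $d$-simplex and $f_{m\sharp}P=\sum_{i}g_i\sum_{\sigma\subseteq\Delta_i}f_m(\sigma)$; reading off this representation,
\[
\mass(f_{m\sharp}P)\le\sum_{i=1}^{N}|g_i|\sum_{\sigma\subseteq\Delta_i}\HM^d(f_m(\sigma)).
\]
By the area formula \cite[Theorem~3.2.22]{Federer:1969}, $\HM^d(f_m(\sigma))=\int_{\sigma}\ap J_d(f_m|_E)\,d\HM^d$, and since the sub-simplices tile each $\Delta_i$ up to an $\HM^d$-null set, summing over $\sigma$ and then over $i$ gives
\[
\mass(f_{m\sharp}P)\le\sum_{i=1}^{N}|g_i|\int_{\Delta_i}\ap J_d(f_m|_E)\,d\HM^d=\int_E\ap J_d(f_m|_E)\,d\mu_P .
\]
Here $f_m|_E=\bar f_m$, while at $\HM^d$-a.e.\ point of $\Delta_i$ the set $E$ has approximate tangent $d$-plane equal to the affine hull of $\Delta_i$, so that $\ap J_d(f|_E)=\ap J_d(f|_{\Delta_i})$ $\HM^d$-a.e.\ on $\Delta_i$.

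Finally I would pass to the limit $m\to\infty$. Applying Lemma~\ref{le:affapp} to $f|_{\Delta_i}$ along the subdivisions $\mathfrak{S}_m\mathcal{K}$ restricted to $\Delta_i$ — whose fullness stays bounded below by $\eta$ and whose mesh tends to $0$ — yields $\int_{\Delta_i}|\ap J_d\bar f_m-\ap J_d(f|_{\Delta_i})|\,d\HM^d\to0$ for each $i$, hence $\int_E\ap J_d(f_m|_E)\,d\mu_P\to\int_E\ap J_d(f|_E)\,d\mu_P$. Combining this with $f_{m\sharp}P\ora{\fn}f_{\sharp}P$ and the lower semicontinuity of mass under flat convergence,
\[
\mass(f_{\sharp}P)\le\liminf_{m\to\infty}\mass(f_{m\sharp}P)\le\lim_{m\to\infty}\int_E\ap J_d(f_m|_E)\,d\mu_P=\int_E\ap J_d(f|_E)(x)\,d\mu_P(x),
\]
which is the assertion. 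The only genuinely analytic ingredient is Lemma~\ref{le:affapp}; granting it, the points to handle with care are the selection of the reduced simplicial representation of $P$ so that $\mu_P=\sum|g_i|\HM^d\mr\Delta_i$, the reconciliation of the fullness bound of the standard subdivisions with the fullness hypothesis in Lemma~\ref{le:affapp}, and the area-formula bookkeeping for the possibly degenerate affine images $f_m(\sigma)$.
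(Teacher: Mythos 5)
Your proposal is correct and follows essentially the same route as the paper's proof: push forward by the simplexwise affine approximations from the definition of $f_{\sharp}P$, bound $\mass(f_{m\sharp}P)$ by $\sum_{i,j}|g_i|\HM^d(f_m(\Delta_{ij}))=\int_E\ap J_d f_m\,d\mu_P$ via the area formula, invoke Lemma~\ref{le:affapp} to pass to the limit in the integrals, and conclude by lower semicontinuity of mass. The extra care you take with the reduced (mass-realizing) representation of $P$ and with degenerate images only makes explicit details the paper leaves implicit.
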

\begin{proof}
Since $f_{\sharp}P$ is a Lipschitz chain, then there
exists a sequence of standard subdivisions 
$\left\{\mathfrak{S}_kE\right\}$ and a sequence of
corresponding simplxwise affine mappings 
$\left\{f_k:f_k:E\to\mathbb{R}^n\right\}$ such that
$f_{\sharp}P=\lim\limits_{k\to\infty}f_{k\sharp}P$. 
We assume that $P=\sum_i g_i \Delta_i$ with $\HM^d(\Delta_i\cap \Delta_j)=0$
for $i\neq j$. Suppose that $ \mathfrak{S}_k \Delta_i=
\cup_{j}\Delta_{ij}^{(k)} $. Then
$f_{k\sharp}P=\sum_{i,j}g_if_k(\Delta_{ij}^{(k)})$.
By Lemma \ref{le:affapp}, 
for each $\varepsilon>0$, there exist
a natural number $N$ such that $k\ge N$, 
\[
\int_{E}\left|\ap J_d(f|_{E})(x)-
\ap J_df_k(x)\right|d\mu_{P}(x)<\varepsilon.
\] 
Then 
\[
	\begin{aligned}
\mass(f_{k\sharp}P)&\leq\sum_{i,j}|g_i|
\HM^d(f_k(\Delta_{ij}^{(k)}))=\sum_{i,j}|g_i|
\int_{\Delta_{ij}^{(k)}}\ap J_df_k(x)d\HM^d(x)\\&=
\int_{E}\ap J_df_k(x)d\mu_{P}(x)\leq\int_{E}\ap
J_d(f|_{E})(x)d\mu_P(x)+\varepsilon.
\end{aligned}\]
By the lower semicontinuity of $M$, letting 
$\varepsilon\to0$,
\[
\mass(f_{\sharp}P)\leq\int_{E}\ap J_d(f|_{E})(x)
d\mu_P(x).
\]
\end{proof}

\begin{proof}[Proof of Theorem \ref{1.4}]
Define $\var_m=\var(P_m),\mu_m=\mu_{P_m}$, since 
$\sup\limits_{m\to\infty}\var_m(\mathbb{R}^n
\times\grass{n}{d})=\sup\limits_{m\to\infty}
\mu_m(\mathbb{R}^n)<\infty$, by the compactness 
theorem of Radon theorem, there is a subsequence 
$\left\{\var_m'\right\}$ and a varifold
$V$ such that $\var_m'\wc V$. We need to prove 
$V=\var(S)$. For any 
$P_m, S\in\RC_d(\mathbb{R}^n;G)$, there exist a
$d$-rectifiable set $E_m$ and a $d$-rectifiable set $M$ such 
that $\mu_{P_m}(\mathbb{R}^n\setminus E_m)=0
\text{ and } \mu_S(\mathbb{R}^n\setminus M)=0$, 
respectively. Then there is a set $M'\subseteq M$ 
and $\mu_S(M')=0$ such that the approximate tangent $d$-plane $\Tan(M,x)$ exists 
and the density $\Theta^d(\mu_S,x)>0$ for all 
$x\in M\setminus M'$.

By the lemma \ref{1.1}, we know that 
$\lim\limits_{m\to\infty}\mu_{m}'=\mu_{S}$. 
Fixed $a\in M\setminus M'$,
we show that for each $C\in\VarTan(V,a)$, 
corresponding to a 
$\Theta^d(\mu_S,a)\var(\Tan(M,a))$ such that 
$C=\Theta^d(\mu_S,a)\var(\Tan(M,a))$. 
Define $T_a=\Tan(M,a)$, $T_x=\Tan(E_m',x),x\in E_m'$, 
then for each $\varphi\in C_c^{\infty}(\cball(0,1)
\times G(n,d),\mathbb{R})$,
\[\begin{aligned}
C(\varphi)&=\lim\limits_{i\to\infty}
(T_{a,r_i})_{\sharp}V(\varphi)=
\lim\limits_{i\to\infty}\lim\limits_{m\to\infty}
(T_{a,r_i})_{\sharp}\var_m'(\varphi)\\&=
\lim\limits_{i\to\infty}\lim\limits_{m\to\infty}
\displaystyle\int_{\cball(0,1)
\times \grass{n}{d}}\varphi(x,T)
d(T_{a,r_i})_{\sharp}\var_m'(x,T)\\&=
\lim\limits_{i\to\infty}\lim\limits_{m\to\infty}
\frac{1}{r_i^d}\displaystyle\int_{\cball(a,r_i)
\times \grass{n}{d}}\varphi
\left(r_i^{-1}(x-a),T\right)d\var_m'(x,T)\\&
=\lim\limits_{i\to\infty}\lim\limits_{m\to\infty}
\frac{1}{r_i^d}\displaystyle
\int_{E_m'\cap\cball(a,r_i)}
\varphi\left(r_i^{-1}(x-a),T_x\right)
d\mu_m'(x).
\end{aligned}\]
Similarly,
\[\begin{aligned}
\Theta^d(\mu_S,a)\var(T_a)(\varphi)&=
\Theta^d(\mu_S,a)\displaystyle
\int_{\cball(0,1)\times\grass{n}{d}}
\varphi(x,T)d\var(T_a)(x,T)\\&=
\Theta^d(\mu_S,a)\displaystyle
\int_{\cball(0,1)\cap T_a}\varphi(x,T_a)d\HM^d(x)\\&
=\Theta^d(\mu_S,a)\displaystyle\int_{\cball(0,1)
\cap T_a}\psi(x)d\HM^d(x)\\&=
\Theta^d(\mu_S,a)\HM^d\mr T_a(\psi),
\end{aligned}\]
where $\psi\in C_c^{\infty}(\cball(0,1),\mathbb{R})$ 
is defined by $\psi(x)=\varphi(x,T_a)$. 
In addition, since 
\[
\VarTan(\mu_S,a)=\Theta^d(\mu_S,a)\HM^d\mr T_a,
\]
then we can write
\[
\Theta^d(\mu_S,a)\HM^d\mr T_a=\lim\limits_{i\to\infty}
(T_{a,r_i})_{\sharp}\mu_S.
\] 
Thus
\[\begin{aligned}
\Theta^d(\mu_S,a)\var(T_a)(\varphi)&=
\lim\limits_{i\to\infty}(T_{a,r_i})_{\sharp}
\mu_S(\psi)=\lim\limits_{i\to\infty}
\lim\limits_{m\to\infty}
(T_{a,r_i})_{\sharp}\mu_m'(\psi)\\&=
\lim\limits_{i\to\infty}\lim\limits_{m\to\infty}
\frac{1}{r_i^d}\displaystyle\int_{\cball(a,r_i)}
\psi\left(r_i^{-1}(x-a)\right)
d\mu_m'(x)\\&=\lim\limits_{i\to\infty}
\lim\limits_{m\to\infty}\frac{1}{r_i^d}
\displaystyle\int_{E_m'\cap\cball(a,r_i)}
\varphi\left(r_i^{-1}(x-a),T_a\right)d\mu_m'(x).
\end{aligned}\]
Therefore
\[\begin{aligned}
\left|C(\varphi)-\Theta^d(\mu_{S},a)\var(T_a)
(\varphi)\right|&\leq\lim\limits_{i\to\infty}
\lim\limits_{m\to\infty}\frac{1}{r_i^d}
\displaystyle\int_{E_m'\cap\cball(a,r_i)}
\left|\varphi(r_i^{-1}(x-a),T_x)-
\varphi(r_i^{-1}(x-a),T_a)\right|
d\mu_m'(x)\\&\leq\limsup_{i\to\infty}
\limsup_{m\to\infty}\frac{1}{r_i^d}
\left\|D\varphi\right\|_{\infty}
\displaystyle\int_{E_m'\cap\cball(a,r_i)}
\left\|(T_{x})_{\natural}-(T_a)_{\natural}\right\|
d\mu_m'(x).
\end{aligned}\]
We claim that
\[
\limsup_{i\to\infty}\limsup_{m\to\infty}
\frac{1}{r_i^d}
\left\|D\varphi\right\|_{\infty}\displaystyle
\int_{E_m'\cap\cball(a,r_i)}
\left\|(T_{x})_{\natural}-(T_a)_{\natural}\right\|
d\mu_m'(x)=0.
\] 
Employing \cite[8.9(3)]{Allard:1972}, we get that
\[\left\|(T_{x})_{\natural}-(T_a)_{\natural}\right\|=
\left\|(T_a)_{\natural}^\perp\circ(T_{x})_{\natural}\right\|=
\sup\limits_{u\in T_x,\left|u\right|=1}
\left|(T_a)_{\natural}^\perp(u)\right|,
\] 
so that we can find $u_1\in T_x$ with $\left|u_1\right|=1$ such that
$\left\|(T_{x})_{\natural}-(T_a)_{\natural}\right\|=
\left|(T_a)_{\natural}^\perp(u_1)\right|$. Let $u_1,\cdots,u_d$ 
be an unit orthogonal basis of $T_x$, and let 
$p:E_n'\cap\cball(a,r_i)\to T_a$ be an orthogonal mapping defined by 
$p(x)=(T_a)_{\natural}(x)$, then
\[
\ap J_dp(x)=\left|(T_a)_{\natural}(u_1)\wedge\cdots
\wedge(T_a)_{\natural}(u_d)\right|\leq
\left|(T_a)_{\natural}(u_1)\right|=
\left(1-\left\|(T_{x})_{\natural}-(T_a)_{\natural}
\right\|^2\right)^{1/2}.
\]
Thus 
\[
\left\|(T_{x})_{\natural}-(T_a)_{\natural}\right\|^2
\leq1-\ap J_dp(x)^2\leq 2\left(1-\ap J_dp(x)\right).
\] 
Since $\mu_S(\partial\cball(a,r))=0$ for almost all $r$, we can assume that the choice of all $r_i$ above satisfying $\mu_S(\partial\cball(a,r_i))=0$, then
$\lim\limits_{m\to\infty}\mu_m'(\cball(a,r_i))=\mu_S(\cball(a,r_i))$.
By Lemma (4.2) in \cite{Fleming:1966},
\[
P_m'\mr\cball(x_i,r_i)\ora\fn S\mr\cball(x_i,r_i),
\]
so 
$p_{\sharp}(P_m'\mr\cball(a,r_i))\ora\fn p_{\sharp}(S\mr\cball(a,r_i))$. By the lower semicontinuity of $\mass$,
\[
\mass(p_{\sharp}(S\mr\cball(a,r_i))\leq\liminf_{n\to\infty}\mass(p_{\sharp}(P_m'\mr\cball(a,r_i)))
\]
By Lemma \ref{1.3},
\[
\displaystyle\int_{E_m'\cap\cball(a,r_i)}(1-\ap J_dp(x))d\mu_m'(x)\leq
\mass(P_m'\mr\cball(a,r_i))-\mass(p_{\sharp}(P_m'\mr\cball(a,r_i))).
\]  
Thus 
\[\begin{aligned}
&\limsup_{m\to\infty}\displaystyle\int_{E_m'\cap\cball(a,r_i)}(1-\ap J_dp(x))d\mu_m'(x)\\&\leq\limsup_{m\to\infty}\mass(P_m'\mr\cball(a,r_i))
-\liminf_{m\to\infty}\mass(p_{\sharp}(P_m'\mr\cball(a,r_i)))\\&
\leq\mass(S\mr\cball(a,r_i))-\mass(p_{\sharp}(S\mr\cball(a,r_i))).
\end{aligned}\]
By Lemma \ref{1.2},
\[
\lim\limits_{i\to\infty}\frac{1}{r_i^{d}}[\mass(S\mr\cball(a,r_i))-\mass(p_{\sharp}(S\mr\cball(a,r_i)))]=0.
\]
So
\[
\limsup_{i\to\infty}\limsup_{m\to\infty}\frac{1}{r_i^{d}}\displaystyle\int_{E_m'\cap\cball(a,r_i)}(1-\ap J_dp(x))d\mu_m'(x)=0.
\]
Since 
\[\begin{aligned}
\limsup_{i\to\infty}\limsup_{m\to\infty}\frac{\mu_m'(E_m'\cap\cball(a,r_i))}{{r_i^d}}&\leq\limsup_{i\to\infty}\limsup_{m\to\infty}\frac
{\mu_m'(\cball(a,r_i))}{r_i^d}\\&\leq\lim_{i\to\infty}\frac{\mu_S(\cball(a,r_i))}{r_i^d}=\omega_d\Theta^d(\mu_S,a)
\end{aligned}\]
By the H$\ddot{\text{o}}$lder's inequality,
\[\begin{aligned}
&\limsup_{i\to\infty}\limsup_{m\to\infty}\frac{1}{r_i^d}\displaystyle\int_{E_m'\cap\cball(a,r_i)}
\left\|(T_{x})_{\natural}-(T_a)_{\natural}\right\|
d\mu_m'(x)\\&\leq\limsup_{i\to\infty}\limsup_{m\to\infty}\sqrt{2}\Big(\frac{\mu_m'(E_m'\cap\cball(a,r_i))}{{r_i^d}}.\frac{1}{r_i^d}
\displaystyle\int_{E_m'\cap\cball(a,r_i)}
(1-\ap J_dp(x))d\mu_m'(x)\Big)^{\frac{1}{2}}\\&=0.
\end{aligned}\]
Thus
\[
C\mr(\cball(0,1)\times\grass{n}{d})=\Theta^d(\mu_S,a)\var(T_a)
\mr(\cball(0,1)\times\grass{n}{d}),
\]
but both $C$ and $\Theta^d(\mu_S,a)\var(T_a)$ are cones, 
so that $C=\Theta^d(\mu_S,a)\var(T_a)$, that is,  $\VarTan(V,a)=\VarTan(\var(S),a)$. Hence $V=\var(S)$.
\end{proof}
\begin{lemma}\label{1.5}
Let $f:\mathbb{R}^n\to\mathbb{R}^n$ be a $C^1$
mapping, and let $S\in\RC_d(\mathbb{R}^n;G)$ be a rectifiable flat chain.
Suppose that $M$ is a $d$-rectifiable set such that 
$\mu_S(\mathbb{R}^n\setminus M)=0$. Then
\begin{equation}\label{eq:coml}
\mass(f_{\sharp}S)\leq\int_{M}\ap J_d(f|_M)(x)d\mu_{S}(x).
\end{equation}
\end{lemma}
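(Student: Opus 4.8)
The plan is to reduce to the polyhedral case, Lemma~\ref{1.3}, and then pass to the limit using the varifold convergence of Theorem~\ref{1.4}. The key observation is that the integrand in \eqref{eq:coml} comes from one fixed function on $\mathbb{R}^n\times\grass{n}{d}$: setting $g(x,T)=\left\|\wedge_dDf(x)\circ T_\natural\right\|$, for every $d$-rectifiable set $N$ the chain rule for approximate differentials together with the definition of the approximate Jacobian gives $\ap J_d(f|_N)(x)=g(x,\Tan(N,x))$ for $\HM^d$-a.e.\ $x\in N$ (cf.\ \cite[Theorem~3.2.22]{Federer:1969}). Since $f\in C^1$ and $T\mapsto T_\natural$ is continuous, $g$ is continuous, and since $0\le g\le\Lip(f)^d$ it is bounded (here we use, as in the construction of $f_\sharp$ on $\FC_d(\mathbb{R}^n;G)$, that $f$ has finite Lipschitz constant).

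First I would pick polyhedral $d$-chains $P_m$ with $P_m\ora\fn S$ and $\mass(P_m)\to\mass(S)$ (possible since $S\in\RC_d(\mathbb{R}^n;G)$ has finite mass). Then $\fn(f_\sharp(P_m-S))\le\max(\Lip(f)^d,\Lip(f)^{d+1})\,\fn(P_m-S)\to0$, so $f_\sharp P_m\ora\fn f_\sharp S$, and lower semicontinuity of mass gives $\mass(f_\sharp S)\le\liminf_m\mass(f_\sharp P_m)$. Writing $E_m=\spt P_m$, Lemma~\ref{1.3} bounds $\mass(f_\sharp P_m)\le\int_{E_m}\ap J_d(f|_{E_m})(x)\,d\mu_{P_m}(x)$; since $\var(P_m)=\mu_{P_m}\otimes\delta_{\Tan(E_m,x)}$ (Definition~\ref{def:vf} with $M=E_m$) and $\var(S)=\mu_S\otimes\delta_{\Tan(M,x)}$ with $\mu_S(\mathbb{R}^n\setminus M)=0$, the reformulation above turns this into
\[
\mass(f_\sharp P_m)\le\int_{\mathbb{R}^n\times\grass{n}{d}}g\,d\var(P_m),\qquad\int_M\ap J_d(f|_M)(x)\,d\mu_S(x)=\int_{\mathbb{R}^n\times\grass{n}{d}}g\,d\var(S).
\]

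It then remains to pass to the limit in $\int g\,d\var(P_m)$. By Theorem~\ref{1.4}, $\var(P_m)\wc\var(S)$; moreover the total masses converge, $\var(P_m)(\mathbb{R}^n\times\grass{n}{d})=\mu_{P_m}(\mathbb{R}^n)=\mass(P_m)\to\mass(S)=\mu_S(\mathbb{R}^n)=\var(S)(\mathbb{R}^n\times\grass{n}{d})$. From these two facts I would deduce $\int g\,d\var(P_m)\to\int g\,d\var(S)$: choose $\chi\in C_c(\mathbb{R}^n\times\grass{n}{d})$ with $0\le\chi\le1$ and $\chi\equiv1$ on a large ball, split $\int g\,d\var(P_m)=\int g\chi\,d\var(P_m)+\int g(1-\chi)\,d\var(P_m)$, let $m\to\infty$ in the first summand (which converges, as $g\chi\in C_c$), and control the second by $\left\|g\right\|_\infty\big(\var(P_m)(\mathbb{R}^n\times\grass{n}{d})-\int\chi\,d\var(P_m)\big)$, a quantity that converges to $\left\|g\right\|_\infty\int(1-\chi)\,d\var(S)$ and is therefore small, uniformly in $m$, once the ball is large enough. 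Combining with the previous paragraph,
\[
\mass(f_\sharp S)\le\liminf_m\mass(f_\sharp P_m)\le\liminf_m\int g\,d\var(P_m)=\int g\,d\var(S)=\int_M\ap J_d(f|_M)(x)\,d\mu_S(x),
\]
which is \eqref{eq:coml}.

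The step I expect to be the main obstacle is precisely this last passage to the limit. The weak convergence $\var(P_m)\wc\var(S)$ from Theorem~\ref{1.4} only tests against $C_c$ functions, whereas $g$, though bounded and continuous, is ``supported everywhere''; one must rule out loss of $g$-mass at infinity, and this is exactly what the hypothesis $\mass(P_m)\to\mass(S)$ buys (via convergence of total varifold masses). This is why Lemma~\ref{1.5} is naturally phrased as a consequence of Theorem~\ref{1.4}. The remaining ingredients — the pointwise identity $\ap J_d(f|_N)(x)=g(x,\Tan(N,x))$, the description $\var(P)=\mu_P\otimes\delta_{\Tan(\spt P,x)}$ for polyhedral $P$, and lower semicontinuity of mass — are routine.
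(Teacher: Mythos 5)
Your proposal is correct and follows the same skeleton as the paper's proof: approximate $S$ by polyhedral chains $P_m$ with $\mass(P_m)\to\mass(S)$, apply Lemma~\ref{1.3} to each $P_m$, invoke Theorem~\ref{1.4} to pass to the limit against $g(x,T)=\left\|\wedge_dDf(x)\circ T_\natural\right\|$, and finish with lower semicontinuity of mass. The one place you diverge is the treatment of the fact that $g$ is not compactly supported: the paper splits into two cases, first assuming $\spt S$ compact and choosing the $P_m$ with uniformly bounded supports (so that a single cutoff reduces $g$ to a $C_c$ test function), and then handling general $S$ by truncating with balls $\cball(0,r_m)$ and taking a second limit. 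You instead exploit that $\mass(P_m)\to\mass(S)$ gives convergence of the total varifold masses, so weak convergence upgrades to convergence against bounded continuous functions via the standard cutoff/tightness argument; this avoids the case split entirely and is arguably cleaner. Note that both your argument and the paper's tacitly require $f$ to be Lipschitz (yours globally, to bound $\left\|g\right\|_\infty$; the paper's to define $f_\sharp$ on flat chains at all), which is not literally implied by ``$C^1$'' as stated in the lemma — but you flag this, and it is an imprecision inherited from the statement rather than a gap in your proof.
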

\begin{proof}
If $\spt S$ is compact, we can find a 
sequence $\left\{P_m\right\}$ of polyhedral $d$-chains  
such that 
\[	
P_m\ora\fn S,\mass(P_m)\to\mass(S),
\spt P_m\subseteq\spt S+\oball(0,1).
\] 
From Theorem \ref{1.4}, we get that $\var(P_m)\wc \var(S)$. Thus for any
$\varphi\in C_c(\mathbb{R}^n\times \grass{n}{d},\mathbb{R})$, 
\begin{equation}\label{eq:3}
	\lim_{m\to \infty}\var(P_m)(\varphi)=\var(S)(\varphi).
\end{equation} 
Since $f$ is a $C^1$ mapping, we have that $Df$ is continuous.
Let $\varphi:\mathbb{R}^n\times G(n,d)\to\mathbb{R}$ be the continuous
function given by
\[
\varphi(x,T)=\left\|\wedge_d Df(x)\circ T_{\natural}\right\|.
\] 
Set $E_m=\spt P_m$; $T_{m}(x)=\Tan(E_m,x),x\in E_m$; $T(x)=\Tan(M,x), x\in M$.
Since the support of $\var(P_m)$ is uniformly bounded, we can apply $\varphi$ to
\eqref{eq:3}, and we get that
\[
\lim\limits_{m\to\infty}\displaystyle\int_{E_m}
\left\|\wedge_d Df(x)\circ T_{m}(x)_{\natural}\right\|
d\mu_{P_m}(x)=\displaystyle\int_{M}\left\|\wedge_d
Df(x)\circ T(x)_{\natural}\right\|d\mu_S(x).
\] 
By Lemma \ref{1.3},
\[
\mass(f_{\sharp}P_m)\leq\int_{E_m}
\ap J_d(f|_{E_m})(x)d\mu_{P_m}(x).
\]
By the lower semicontinuity of $M$,
\[
	\begin{aligned}
\mass(f_{\sharp}S)&\leq\liminf_{m\to\infty}
\mass(f_{\sharp}P_m)\leq\liminf_{m\to\infty} \int_{E_m}
\ap J_d(f|_{E_m})(x)d\mu_{P_m}(x)\\
&= \lim_{m\to\infty}\int_{E_m}
\left\|\wedge_dDf(x)\circ T_{m}(x)_{\natural}\right\| d\mu_{P_m}(x)\\
&= \int_{M}\left\|\wedge_d
Df(x)\circ T(x)_{\natural}\right\|d\mu_S(x)=
\int_{M}\ap J_d(f|_M)(x)d\mu_S(x).
\end{aligned}
\]
If $\spt S$ is not compact, we take a sequence of positive real 
numbers $\{r_m\}$ such that $r_m\to \infty$ and
$\mu_S(\partial\cball(0,r_m))=0$. Set $S_m=S\mr\cball(0,r_m)$,
then $S_m$ converges in mass to $S$ and
$\mu_{S_m}(\mathbb{R}^n\setminus M)\to 0$. Thus \eqref{eq:coml} holds for
$S_m$, and we get that
\[
\mass(f_{\sharp}S)=\lim\limits_{m\to\infty}\mass(f_{\sharp}S_m)
\leq\lim\limits_{m\to\infty}\int_{M}\ap J_d(f|_M)(x)d\mu_{S_m}(x)
=\int_{M}\ap J_d(f|_M)(x)d\mu_S(x).
\]

\end{proof}
\begin{proof}[Proof of Theorem \ref{thm:mainthm}]
The proof is similar to Theorem \ref{1.4}, we see that the mapping $p$ in the proof of Theorem \ref{1.4}
is only an orthogonal mapping, so it is also $C^1$ mapping. Therefore we only need to replace Lemma \ref{1.3} with Lemma \ref{1.5}.
\end{proof}
\section{Examples}
From following example, we see that the reverse statement of Theorem
\ref{thm:mainthm} is untrue. Thus $\var(S_m)\wc\var(S)$,
$\partial S_m=\partial S=0$ and $\mass(S_m)\to \mass(S)$ do not imply $S_m\ora\fn S$.
\begin{example}\label{ex:1}
Let 
\[
\Omega_m= \bigcup_{1\leq k\leq m} \left\{z\in \mathbb{C}:\frac{2k-1}{m}\pi\leq
\arg z \leq \frac{2k}{m}\pi, 1\leq |z|\leq 1+\frac{1}{m^2}\right\}
\]
and let $S_m$ be the one-dimensional flat $\mathbb{Z}_2$-chain 
associated to set-theoretic boundary of $\Omega_m$. 
Since the area of $\Omega_m$ tends to zero, 
we get that $S_m$ tends to 0 in flat norm. That is, $S_m\ora{\fn} 0$
and $\partial S_m=0$, but $\var(S_m)\to \var(S)$, 
where $S$ is the one-dimensional flat $\mathbb{Z}_2$-chain 
associated to the unit circle.	
\end{example}
The following example shows that $\var(S_m)\wc \var(S)$ and
$S_m\ora{\fn} S$ do not imply $\mass(S_m)\to \mass(S)$ in general, 
if the support of the sequence
is not uniformly bounded.
\begin{example}\label{ex:2}
Let $Q_m=[m,m+1]\times [0,1/m]$ be a rectangle in $\mathbb{R}^{2}$ and
let $S_m$ be the one-dimensional flat $\mathbb{Z}_2$-chain 
associated to set-theoretic boundary of $Q_m$. Since the area 
of $Q_m$ tends to zero, we have that $S_m$ tends to 0 in flat norm.
It is not hard to see that $\var(S_m)\wc 0$. 
But $\mass(S_m)\to 2\neq 0$.
\end{example}
\bibliography{1}
\end{document}